\title{Categorical view of the Partite Lemma\\ in structural Ramsey theory
} %Your title goes here!
\author{Sebastian Junge}
\date{}
\address{Department of Mathematics, Cornell University, Malott Hall, Ithaca, NY 14853}
\email{sj676@cornell.edu}
\newtheorem*{definition}{Definition}
\newtheorem*{lemma*}{Lemma}
\newtheorem{theorem}{Theorem}
\newtheorem{lemma}[theorem]{Lemma}
\newtheorem{corollary}[theorem]{Corollary}
\begin{document}
	\setlength{\footskip}{24pt}
	
	\begin{abstract} We construct of the main object of the Partite Lemma as the colimit over a certain diagram. This gives a purely category theoretic take on the  Partite Lemma and establishes the canonicity of the object. Additionally, the categorical point of view allows us to unify the direct Partite Lemma in \cite{2}, \cite{3}, and \cite{10} with the dual Paritite Lemma in \cite{6}.
	\end{abstract}\maketitle 
	\section{Introduction}
Category theory has long been used in Ramsey theory. Leeb created a category theoretic framework for proving Ramsey statements in lecture notes that were recorded in \cite{15} in 1973. In \cite{13} Graham, Rothschild, and Leeb proved a Ramsey theorem for finite vector spaces with this framework.  The influential Ne\v{s}et\v{r}il--R\"{o}dl Theorem proved in \cite{2}, \cite{3}, \cite{10} in the 1970's and 1980's was expressed in the language of category theory . After these results there was a shift back to the language of classes, though categorical ideas were implicit in Solecki's papers \cite{12},\cite{6}, and \cite{5}. In 2015 Gromov advocated for a deeper use of category theory in Ramsey theory in \cite{16}. More recently the work of Masulovic has created more general categorical techniques to prove Ramsey statements, such as in \cite{7}. We follow Gromov's lead and reexamine a key result in the Ne\v{s}et\v{r}il--R\"{o}dl Theorem using a categorical approach.

 Structural Ramsey theory was initiated by the work of Abramason--Harrington in \cite{1} and  Ne\v{s}et\v{r}il--R\"{o}dl in \cite{2},\cite{3}, and \cite{10} expanded upon the Abramson and Harrington result. Recently Ne\v{s}et\v{r}il and Hubi\v{c}ka proved a version of the Ne\v{s}et\v{r}il--R\"{o}dl Theorem for classes of structures with certain closure properties in \cite{11}. This group of theorems is fundamental to structural Ramsey theory, which has seen a revival in the recent years \cite{14},\cite{17},\cite{18},\cite{19},\cite{20}. At the core of these theorems is a result known as the Partite Lemma.   In \cite{6} Solecki gave a dual version of the Partite Lemma. The present paper shows that the objects produced in the Partite Lemma and its dual version are actually canonical category theoretic objects called cocones and colimits. Furthermore, we emphasize that this exploration of the Partite Lemma allows for a unification of the original Partite Lemma and the dual version of it. 
	
	 The main theorem of this paper asserts that a certain categories have cocones and colimits over diagrams that are defined using Hales--Jewett lines.  While the definition of the diagrams uses Hales--Jewett lines, our main theorem do not involve any Ramsey theory. After establishing, in our main theorem, the existence of the colimits, we prove that the object needed for the Partite Lemma is our colimt. All the properties in the conclusion of the Partite Lemma follow directly from this object being a cocone. So our main theorem isolates the mathematical properties of the construction in the Partite Lemma through the ideas of cocones and colimits.  Apart from exhibiting the category theoretic nature of this object, our result shows that its canonicity as colimits are canonical. Additionally, our approach yields a unification proofs of the Partite Lemma  \cite{2}, \cite{3},\cite {10} and the dual Partite Lemma \cite{6}.
	
	We now describe this paper's organization. In Section 2 we give a generalization of structures where we add a category $\mathcal{C}$ to the definition of language and structures. This allows us to unify structures as occurring  in \cite{5} and  dual structures found in \cite{6}. We then define blocks, which are a generalization of objects in \cite{5} and paritite systems in \cite{2}, \cite{3}, and \cite{10}.  In Section 3 we introduce a subcategory of blocks and a diagram in the subcategory using Hales--Jewett lines. Then we prove our main theorem which gives the existence of colimits over these line diagrams.  In Section 4 we turn our attention to Ramsey theory.  We explain how cocones can be used to transfer the Ramsey property and as a consequence we prove the Partite Lemma using our main theorem. We discuss how to prove the Partite Construction, which is the other main proposition in the Ne\v{s}et\v{r}il--R\"{o}dl Theorem. We finish by applying the Partite Lemma to prove the results in \cite{6} and \cite{5} in a unified manner. \\
	
	The author would like to thank S\l awomir Solecki for spending ample time helping refine the presentation of this paper.

	\section{Structures and blocks}
	We give a brief overview of the types of structures used in Ramsey theory to motivate our definition of structures.  In \cite{2}, \cite{3}, and \cite{10}   Ne\v{s}et\v{r}il and R\"{o}dl prove a Ramsey result for linearly ordered hypergraphs.  Solecki expands on this result in 
	\cite{5} by showing a Ramsey statement for linearly ordered structures with standard interpretation of relation symbols and dual interpretation of function symbols. Furthermore in \cite{6} he proves a dual Ramsey result for linearly ordered structures with the standard interpretations function symbols and dual interpretation of relation symbols (for more information on the interpretations in direct and dual structures see section 4.5) . Note that \cite{14} gives a new proof of a special case (linearly ordered structures with interpretations of function symbols only) of results from \cite{6}.  We give a common generalization of the results in \cite{6} and \cite{5}, that is we unify the dual and direct structural Ramsey theory.
	
There exists clear analogies between the structures and arguments in \cite{6} and \cite{5}. Our aim is to make these analogies concrete and unify the idea of these two papers. To do so we formulate a new notion of structure which includes the structures found in \cite{6} and \cite{5}. A crucial point in this new notion is an addition of a category $\mathcal{C}$ to the definition of structures.  The  structures given in \cite{5} will arise  when $\mathcal{C}=\textbf{Fin}$ and the structures in \cite{6} will arise when $\mathcal{C}=\textbf{Fin}^{\textnormal{op}}$.  Next we formulate the notion of blocks which are a generalization of objects in \cite{5} which in turn build on the definition of partite-systems in \cite{2}, \cite{3}, and \cite{10}.
	\subsection{Structures}
	We develop the concept of structures with a category $\mathcal{C}$ by following the standard development of structures. We start by adding a category $\mathcal{C}$ to the definition of  of language. Then we define structures for these new types of languages. Finally we define homomorphisms in the natural way. 
	
	\begin{definition}
		For any category $\mathcal{C}$, a $\mathcal{C}$\textbf{-language} $\mathcal{L}$ is a tuple $(\mathcal{C},\mathcal{L}_{F},\mathcal{L}_{R}, ar_{func},ar_{rel})$ where $\mathcal{L}_{F}$ is a set of function symbols, $\mathcal{L}_R$ is a set of relation symbols, $ar_{func}\colon \mathcal{L}_{F}\to \textnormal{Ob}(\mathcal{C})^2$ assigns the arity of function symbols, and $ar_{rel}\colon \mathcal{L}_{F}\to \textnormal{Ob}(\mathcal{C})$ assigns the arity of relation symbols.
	\end{definition}
	The usual definition of language has arities whose ranges are finite sets instead of objects in a category $\mathcal{C}$. Thus the definition of a \textbf{Fin}-language is the standard definition of a language. Now that we have the definition of language we can define structures.
	\begin{definition}
		An $\mathcal{L}$\textbf{-structure} $\mathsf{X}$ is an object $X\in \text{Ob}(\mathcal{C})$ along with interpretations of the symbols in $\mathcal{L}$ that are implemented as follows,\\
		for each relation symbol $R\in \mathcal{L}$ of arity $r\in\text{Ob}(\mathcal{C})$ the interpretation of $R$ is a set\\ $R^{\mathsf{X}}\subseteq \textnormal{Hom}_{\mathcal{C}}(r,X)$ and \\
		for each function symbol $F\in \mathcal{L}$ of arity $(r,s)\in \text{Ob}(\mathcal{C})^2$ the interpretation of $F$ is a function
		$F^{\mathsf{X}}\colon \textnormal{Hom}_{\mathcal{C}}(X,r)\to \textnormal{Hom}_{\mathcal{C}}(X,s)$.
	\end{definition}
	
	The standard definition of structures lets $X$ be a set and interpretations are functions instead of morphisms. Furthermore if $\mathsf{X}$ is a structure and $F$ is a function symbol of arity $(r,s)$ then under the usual definition of structure $F^{\mathsf{X}}\colon X^r\to X^s$ while in our definition when $\mathcal{C}=\textbf{Fin}$, $F^{\mathsf{X}}\colon r^X\to s^X$. So if $\mathcal{L}$ is \textbf{Fin}-language, then $\mathcal{L}$-structures have relations which are defined in the usual manner and dual interpretations of function symbols. Thus $\mathcal{L}$-structures are defined as in \cite{5}. 
	
	If $\mathcal{L}$ is a $\textbf{Fin}^{\text{op}}$ language then for any structure $\mathsf{X}$ and relation symbol $R$ of arity $r$, $R^{\mathsf{X}}\subset r^X$. If $F$ is a function symbol of arity $(r,s)$ then $F^{\mathsf{X}}\colon X^r\to X^s$. Thus function symbols are defined in the standard way but the relation symbols are interpreted in a dual manner. Thus $\mathcal{L}$-structures are the same as dual structures found in \cite{6}.
	   
	 Next we define homomorphisms in the natural way.
	\begin{definition}
		Let $\mathcal{C}$ be a category and $\mathcal{L}$ be a $\mathcal{C}$-language.  If $\mathsf{X},\mathsf{Y}$ are $\mathcal{L}$-structures and $f\in\textnormal{Hom}_{\mathcal{C}}(X,Y)$, then $f$ is an $\mathcal{L}$-\textbf{homomorphism} if:\\
		for all relation symbols $R\in \mathcal{L}$ with arity $r\in\text{Ob}(\mathcal{C})$ and all $\eta\in \textnormal{Hom}_{\mathcal{C}}(r,X)$, $$R^\mathsf{X}(\eta)\Leftrightarrow R^\mathsf{Y}(f\circ \eta)$$
		and for all function symbols $F\in \mathcal{L}$ with arity $(r,s)\in\text{Ob}(\mathcal{C})^2$ and all $\gamma\in \textnormal{Hom}_{\mathcal{C}}(Y,r)$, $$F^\mathsf{X}(\gamma\circ f)=F^\mathsf{Y}(\gamma)\circ f.$$
	\end{definition}
\subsection{Blocks}
The objects that we consider in our main theorem are blocks. Blocks are a categorical version of objects in \cite{5}. We will use the term blocks instead of objects to avoid confusion with categorical notation. Objects are a generalization of partite-systems which are used in the Partite Lemma.
	\begin{definition}
		Fix a category $\mathcal{C}$ and a $\mathcal{C}$-language $\mathcal{L}$.  A \textbf{block} is a pair $\mathcal{X}=(\mathsf{X},\pi)$ where $\mathsf{X}$ is an $\mathcal{L}$-structure and $\pi\in \textnormal{Hom}_{\mathcal{C}}(X,U)$ for some $U\in \textnormal{Ob}(\mathcal{C})$. 	\end{definition}

	If $\mathcal{X}$ is a block where the morphism $\pi$ has a left inverse we call $\mathcal{X}$ a \textbf{monic block}. An example of a monic block is a structure $\mathsf{X}$ which can be viewed as the block $\mathcal{X}=(\mathsf{X},\textnormal{Id}_{X})$.
	
	We now define morphisms between blocks. 
	\begin{definition}
		Fix a category $\mathcal{C}$ and a $\mathcal{C}$-language $\mathcal{L}$. Suppose $\mathcal{X}=(\mathsf{X},\pi)$ and $\mathcal{Y}=(\mathsf{Y},\rho)$ are blocks so that 
		$\pi\in\textnormal{Hom}_{\mathcal{C}}(X,U)$ and $\rho\in\textnormal{Hom}_{\mathcal{C}}(Y,V)$. A \textbf{block-homomorphism} between $\mathcal{X}$ and $\mathcal{Y}$ is a homomorphism $f\in\textnormal{Hom}_{\mathcal{C}}(X,Y)$ for which there is an $i\in \textnormal{Hom}_{\mathcal{C}}(U,V)$ such that $\rho\circ f=i\circ \pi$.
	\end{definition}
	
	\begin{center}
		\begin{tikzcd}
			X\arrow[r,"f"]\arrow[d, "\pi"]& Y\arrow[d,"\rho"]\\
			U\arrow[r,"i"] & V\\
		\end{tikzcd}
	\end{center}
A block-homomorphism is called a \textbf{block-monomorphism} if it has a left inverse in $\mathcal{C}$.

For the remainder of this paper we will adhere to the following convention. We use the letters $U,V,W,X,Y,Z$ to denote objects in the underlying category $\mathcal{C}$, the letters $\mathsf{W},\mathsf{X},\mathsf{Y}, \mathsf{Z}$ to denote structures with underlying objects $W,X,Y,Z$, and the letters $\mathcal{W},\mathcal{X},\mathcal{Y},\mathcal{Z}$ to denote blocks with first coordinate $\mathsf{W},\mathsf{X},\mathsf{Y},\mathsf{Z}$.

	Let $Bl$ be the category where objects are blocks and morphisms are block-homomorphisms. Let $Bl^{m}$ be the subcategory of $Bl$ with the same objects but $\text{Hom}(Bl^{m})$ is the class of block-monomorphisms. 
	\section{The main theorem}
	In this section we will show that a specific subcategory of blocks has colimits over certain diagrams that are defined using Hales--Jewett lines. This result describes the construction of the Partite Lemma in a purely category theoretic manner. We start by defining the category and diagram that we need for our main theorem. We will then state and prove our main result.
	\subsection{The category $Bl_{i_0}$}
	In this section we define a subcategory of $Bl$ which can be viewed as a local version of $Bl$. We start by defining the morphisms for this category. For this section fix a category $\mathcal{C}$ and a $\mathcal{C}$-language $\mathcal{L}$.
	\begin{definition}
		Suppose $\mathcal{X}=(\mathsf{X},\pi)$ and $ \mathcal{Y}=(\mathsf{Y},\rho)$ are blocks so that 
		$\pi\in\textnormal{Hom}_{\mathcal{C}}(X,U)$ and $\rho\in\textnormal{Hom}_{\mathcal{C}}(Y,V)$.  If $i_0\in \textnormal{Hom}_{\mathcal{C}}(U,V)$, then an $i_0$-\textbf{homomorphism} between $\mathcal{X}$ and $\mathcal{Y}$ is a block-homomorphism $f\in\textnormal{Hom}_{Bl}(X,Y)$ such that $\rho\circ f=i_0\circ \pi$.
	\end{definition}
	
	An $i_0$-\textbf{monomorphism} is an $i_0$-homomorphism with a left inverse.
	
	We will now define the subcategory for our main theorem.  Fix  a morphism\\ $i_0\in \textnormal{Hom}_{\mathcal{C}}(U,V)$ for some $U,V\in \text{Ob}(\mathcal{C})$. We divide the objects of $Bl_{i_0}$ into two types of objects, domain objects and codomain objects. \textbf{Domain objects} are blocks $(\mathsf{X},\pi)$ where the target of $\pi$ is $U$ (the domain of $i_0$)
	and \textbf{Domain objects} are blocks $(\mathsf{Y},\rho)$ where the target of $\rho$ is $V$ (the codomain of $i_0$).
	Morphisms in $Bl_{i_0}$ between a domain object and a codomain object are $i_0$-homomorphisms, morphisms between domain objects are $\text{Id}_{U}$-homomorphisms, and morphisms between codomain objects are $\text{Id}_{V}$-homomorphisms. 
	
	Let $Bl_{i_0}^{m}$ be the subcategory of $Bl_{i_0}$ with the same objects as $Bl_{i_0}$ where all morphisms have a left inverse in $\mathcal{C}$.
	
	\subsection{The line diagram}
	To define the diagram that we need for our main theorem we introduce the notion of combinatorial lines. 
	\begin{definition}
		If $P$ is a set and $N>0$, then a \textbf{line} $\ell$ in $P^N$ is a nonempty $d(\ell)\subseteq N$ along with $\ell_k\in P$ for each $k\in N\backslash d(\ell)$.
	\end{definition}

	If $\bar{e}\in P^N$ and $\ell$ is a line in $P^N$ we say that $\bar{e}\in \ell$ if $e_k=\ell_k$ for all $k\notin d(\ell)$ and $\bar{e}$ is constant on $d(\ell)$. If $\bar{e}\in\ell$ we let $\ell(\bar{e})$ be the constant value of $\bar{e}$ on $d(\ell)$. Note that for every  $\bar{e}\in P^N$ there is a line $\ell$ so that $\bar{e}\in \ell$.
	
	Given the above definition of lines we will construct a diagram. 
	\begin{definition}
		Fix $N>0$, a category $\mathcal{C}$, a $\mathcal{C}$-language $\mathcal{L}$, $i_0\in \textnormal{Hom}(\mathcal{C})$, a monic domain object $\mathcal{X}\in\text{Ob}(Bl^{m}_{i_0})$, and a codomain object $\mathcal{Y}\in \text{Ob}(Bl^{m}_{i_0})$ . Let $J$ be the category with
		$$\textnormal{Ob}(J)=\textnormal{Hom}_{Bl^{m}_{i_0}}(\mathcal{X}, \mathcal{Y})^N\cup 
		\{\ell\colon \ell\textnormal{ is a line in }
		\textnormal{Hom}_{Bl^{m}_{i_0}}(\mathcal{X},\mathcal{Y})^N\}$$
		For every pair $(\bar{e},\ell)$ where $\bar{e}\in\textnormal{Hom}_{Bl^{m}_{i_0}}(\mathcal{X},\mathcal{Y})^N$, $\ell$ is a line in $\textnormal{Hom}_{Bl^{m}_{i_0}}(\mathcal{X},\mathcal{Y})^N$,  and $\bar{e}\in \ell$ we let $\textnormal{Hom}_J(\bar{e},\ell)$ be a set with one morphism which we denote $(\bar{e},\ell)$. All other morphisms in $J$ are identities. Then the  \textbf{line diagram} is the functor $G\colon J\to Bl^{m}_{i_0}$, $$G(\bar{e})=
		\mathcal{X} \textnormal{ if }\bar{e}\in  \textnormal{Hom}_{Bl^{m}_{i_0}}(\mathcal{X},\mathcal{Y})^N$$
		$$G(\ell)=\mathcal{Y} \text{ if }\ell \text{ is a line in }\textnormal{Hom}_{Bl^{m}_{i_0}}(\mathcal{X},\mathcal{Y})^N $$
		and on non-identity morphisms  $G(\bar{e},\ell)=\ell(\bar{e})$.
	\end{definition}
	Note that the definition of the index category $J$ only depends on $N$, $i_0$, $\mathcal{X}$, and $\mathcal{Y}$. Also notice that since $Bl^{m}_{i_0}$ is a subcategory of $Bl_{i_0}$, $G$ can also be considered as a functor $G\colon J\to Bl_{i_0}$. The object of this section is to build a cocone over $G$ in $Bl^{m}_{i_0}$ that is also the colimit over $G$ in $Bl_{i_0}$. In applications we  only use the existence of a cocone over $G$ in $Bl^{m}_{i_0}$. Being a colimit in $Bl_{i_0}$ assures canonicity of the construction.
	
	\subsection{Statement and proof of the main theorem}
	
	The following theorem is the main result of this paper. In Ramsey theoretic applications only the existence of the object from the conclusion of Theorem 1 is used.  While the proof of this theorem is purely categorical, on a technical level we build on arguments going back to \cite{2}, \cite{3}, and \cite {10}.  Our proof is most closely related to the arguments found in \cite{6} and \cite{5}.
	
	\begin{theorem}
		Let $\mathcal{C}$ be a category that has colimits, let $\mathcal{L}$ be a $\mathcal{C}$-language, and let \\
		$i_0\in\text{Hom}(\mathcal{C})$ have a left inverse. Then for each line diagram $G$, $Bl_{i_0}$ has a colimit over $G$ that is also a cocone over $G$ in $Bl^{m}_{i_0}$. \end{theorem}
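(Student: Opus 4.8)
The plan is to build the object one layer at a time: first in the underlying category $\mathcal{C}$, then promote it to a codomain block, and finally verify the two conclusions separately (block-monicity of the legs, and the universal property in $Bl_{i_0}$). First I would take the colimit of $G$ after composing with the forgetful functor to $\mathcal{C}$: since $\mathcal{C}$ has colimits there is an object $Z$ with a cocone $a_{\bar e}\colon X\to Z$ (for tuples $\bar e$) and $b_\ell\colon Y\to Z$ (for lines $\ell$) satisfying $b_\ell\circ \ell(\bar e)=a_{\bar e}$ whenever $\bar e\in\ell$. I would then equip $Z$ with the data of a codomain block $\mathcal{Z}=(\mathsf Z,\zeta)$ by induced maps. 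Because every transition map $\ell(\bar e)$ is an $i_0$-homomorphism we have $\rho\circ\ell(\bar e)=i_0\circ\pi$, so $\{i_0\circ\pi\}\cup\{\rho\}$ is a cocone from $G$ to $V$; let $\zeta\colon Z\to V$ be the unique induced morphism, so $\mathcal Z$ is a codomain object with $\zeta\circ a_{\bar e}=i_0\circ\pi$ and $\zeta\circ b_\ell=\rho$. The interpretations of $\mathcal L$ on $Z$ are defined the same way: for a function symbol $F$ of arity $(r,s)$ and $\gamma\in\textnormal{Hom}_{\mathcal C}(Z,r)$, the fact that each $\ell(\bar e)$ is an $\mathcal L$-homomorphism makes $\{F^{\mathsf X}(\gamma\circ a_{\bar e})\}\cup\{F^{\mathsf Y}(\gamma\circ b_\ell)\}$ a cocone to $s$, and I define $F^{\mathsf Z}(\gamma)$ to be the induced morphism; by construction this gives the function clause of the homomorphism condition for every leg. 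For a relation symbol $R$ of arity $r$ I set $R^{\mathsf Z}=\{a_{\bar e}\circ\eta:\eta\in R^{\mathsf X}\}\cup\{b_\ell\circ\xi:\xi\in R^{\mathsf Y}\}$, which makes the legs preserve $R$ immediately.

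The heart of the argument is to show the legs are block-monomorphisms, i.e. that $a_{\bar e}$ and $b_\ell$ have left inverses in $\mathcal{C}$, and the key observation is that a single morphism splits all of them. Writing $\pi'$ for a left inverse of $\pi$ (which exists because $\mathcal X$ is a monic domain object) and $j_0$ for a left inverse of $i_0$, put $\lambda:=\pi'\circ j_0\circ\rho\colon Y\to X$. For every $p\in\textnormal{Hom}_{Bl^{m}_{i_0}}(\mathcal X,\mathcal Y)$ we have $\rho\circ p=i_0\circ\pi$, hence $\lambda\circ p=\pi'\circ j_0\circ i_0\circ\pi=\pi'\circ\pi=\textnormal{Id}_X$, so $\lambda$ is a common left inverse of every transition map. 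Using $\lambda$ I would produce the retractions as induced maps. The constant family $c_{\bar e}=\textnormal{Id}_X$, $c_\ell=\lambda$ is a cocone from $G$ to $\mathcal X$ (compatibility is exactly $\lambda\circ\ell(\bar e)=\textnormal{Id}_X$), and the induced $\sigma\colon Z\to X$ satisfies $\sigma\circ a_{\bar e}=\textnormal{Id}_X$ for every tuple. For a fixed line $\ell_0$ I choose a coordinate $k_0\in d(\ell_0)$ and set $c_{\bar e}=e_{k_0}$, with $c_\ell=\textnormal{Id}_Y$ when $k_0\in d(\ell)$ and $c_\ell=\ell_{k_0}\circ\lambda$ when $k_0\notin d(\ell)$; the identity $\ell_{k_0}\circ\lambda\circ v=\ell_{k_0}$ for all $v\in\textnormal{Hom}_{Bl^{m}_{i_0}}(\mathcal X,\mathcal Y)$ makes this a cocone to $\mathcal Y$, and the induced $s_{\ell_0}\colon Z\to Y$ satisfies $s_{\ell_0}\circ b_{\ell_0}=\textnormal{Id}_Y$.

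I expect this retraction step to be the main obstacle. It is the only place where the combinatorics of Hales--Jewett lines is indispensable: the direction $d(\ell)$ and the fixed coordinates of each line must be used so that the competing conditions coming from the different lines through a single tuple cohere into one morphism, and the common left inverse $\lambda$ is precisely what resolves the lines that miss the chosen coordinate. On a technical level this is where the construction recovers the arguments going back to the Ne\v{s}et\v{r}il--R\"{o}dl papers and to Solecki's work. With the splittings in hand, I would verify that each leg also reflects relations (the nontrivial direction of the homomorphism condition) by feeding a relation in $R^{\mathsf Z}$ through the appropriate retraction: applying $\sigma$ and the $s_\ell$ collapses any witnessing equality $a_{\bar e}\circ\eta=b_\ell\circ\xi$ or $b_\ell\circ\xi=b_{\ell'}\circ\xi'$ to a composite with a genuine transition map $e_{k_0}$, whose being an $\mathcal L$-homomorphism transports membership back to $R^{\mathsf X}$ or $R^{\mathsf Y}$; the cross-line case is handled by using the two retractions $s_\ell,s_{\ell'}$ together with $\lambda\circ\ell_{k}=\textnormal{Id}_X$.

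This shows the legs are $\mathcal L$-homomorphisms, and with the splittings that they are block-monomorphisms, so $(\mathcal Z,\{a_{\bar e}\},\{b_\ell\})$ is a cocone in $Bl^{m}_{i_0}$. For the universal property in $Bl_{i_0}$, given any cocone with codomain-object apex $\mathcal W=(\mathsf W,\omega)$ and legs $\alpha_{\bar e},\beta_\ell$, the underlying cocone induces a unique $h\colon Z\to W$ with $h\circ a_{\bar e}=\alpha_{\bar e}$ and $h\circ b_\ell=\beta_\ell$. I would check that $h$ is a block-homomorphism: the square $\omega\circ h=\zeta$ follows by comparing both sides on the legs (on $a_{\bar e}$ both give $i_0\circ\pi$, on $b_\ell$ both give $\rho$) and invoking uniqueness at the $\mathcal C$-level, the function clause follows from the defining property of $F^{\mathsf Z}$, and the relational clause is read off the generators of $R^{\mathsf Z}$ together with the retractions. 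Uniqueness of $h$ in $Bl_{i_0}$ is inherited from uniqueness of the underlying morphism. Hence $\mathcal Z$ is the colimit of $G$ in $Bl_{i_0}$ and simultaneously a cocone over $G$ in $Bl^{m}_{i_0}$.
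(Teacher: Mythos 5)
Your proposal is correct and follows essentially the same route as the paper's proof: colimit in $\mathcal{C}$, structure map and function interpretations via induced morphisms, relations generated through the line legs, and retractions built from the common left inverse (your $\lambda=\pi'\circ j_0\circ\rho$ is exactly the paper's $h\circ\rho$, and your per-line retraction $s_{\ell_0}$ is the paper's $v_i$ with $i=k_0$, including the same case split on $d(\ell)\cap d(\ell')$ for reflecting relations). The only differences are cosmetic: you index the retractions by lines rather than coordinates, split the tuple legs explicitly, and include the $\mathsf{X}$-generators in $R^{\mathsf{Z}}$ (redundant, since every tuple lies on a line and transition maps are homomorphisms).
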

	
	\begin{proof}
		
		Fix $N>0$, $\mathcal{X}=(\mathsf{X},\pi)$ a monic domain object in $Bl^{m}_{i_0}$ and $\mathcal{Y}=(\mathsf{Y},\rho)$ a codomain object in $Bl^{m}_{i_0}$.  For ease of notation let $\textnormal{Hom}_{Bl^{m}_{i_0}}(\mathcal{X},\mathcal{Y})=P$.  
		
		Let $H\colon Bl_{i_0}\to \mathcal{C}$ be the functor defined by $H(\mathcal{W})=W$ and $H(f)=f$. Then there is a colimit $(Z,f_{\ell},f_{\bar{e}})_{\ell, \bar{e}\in\text{Ob}(J)}$ in $\mathcal{C}$ over the diagram $H\circ G$ by assumption.  Since the forgetful functor creates colimits for slice categories (see \cite[p.91-92]{9}), there are interpretations of function symbols $F^\mathsf{Z}$ and  a morphism
		$\sigma\in\text{Hom}_{\mathcal{C}}(Z,V)$ so that if $\mathcal{L}$ has no relation symbols, then $(Z,f_{\ell},f_{\bar{e}})_{\ell, \bar{e}\in\text{Ob}(J)}$ is the colimit in $Bl_{i_0}$.
		
	Thus it remains to show that each $f_{\ell}$ is a split-monomorphism and to define appropriate interpretations  $R^{\mathsf{Z}}$ for all relation symbols in $\mathcal{L}$. To do so we need a cocone $(Y, u_{\ell,i},\bar{e}_i)_{\ell,\bar{e}\in \textnormal{Ob}(J)}$ for each $i< N$. Fix $h\in \text{Hom}_{\mathcal{C}}(V,X)$ so that $h\circ i_0\circ \pi=\rm{Id}_{X}$. Such $h$ exists by assumption. For each $i< N$ let $$ u_{\ell,i}=\left\{\begin{array}{ll}
			\rm{Id}_Y & \text{if } i\in d(\ell)\\
			\ell_i\circ h\circ \rho & \text{otherwise}
		\end{array}\right.$$
		We show that $(Y, u_{\ell,i},\bar{e}_i)_{\ell,\bar{e}\in \textnormal{Ob}(J)}$ is a cocone over $H\circ G$. Fix $\ell$ and $\bar{e}$ so that $\bar{e}\in \ell$. Then we prove $u_{\ell,i}\circ \ell(\bar{e})=\bar{e}_i$ by cases. If $i\in d(\ell)$, then $\textnormal{Id}_Y\circ \ell(\bar{e})=\bar{e}_i$ by definition. If $i\notin d(\ell)$, then $\ell_i=\bar{e}_i$, so since $\ell(\bar{e})$ is an $i_0$-homomorphism, $$\ell_i\circ h\circ \rho\circ \ell(\bar{e})=\ell_i\circ h\circ i_0\circ \pi=\ell_i=\bar{e}_i.$$
		Thus  $(Y, u_{\ell,i},\bar{e}_i)_{\ell,\bar{e}\in \textnormal{Ob}(J)}$ is a cocone over $H\circ G$ for all $i<N$. So for all $i<N$ there is a $v_i$ so that for every line $\ell$ in $P^N$ and every $\bar{e}\in P^N$, $v_i\circ f_\ell=u_{\ell,i}$ and $v_i\circ f_{\bar{e}}=e_{i}$. If $\ell$ is a line in $P^N$ then since $d(\ell)$ is nonempty there is $i\in d(\ell)$ so, $$v_i\circ f_{\ell}=u_{\ell,i}=\textnormal{Id}_{Y}.$$
		Thus we have shown that each $f_{\ell}$ has a left inverse in $\mathcal{C}$.
		\begin{equation*}\begin{tikzcd}[column sep=small] &Y\\&Z\arrow[u,"\exists v_i"] \\ Y_{\ell} \arrow[ur, swap, "f_\ell"]\arrow[uur,crossing over, bend left=50, swap, "v_{\ell,i}"] &\cdots& Y_{\ell'} \arrow[ul, "f_{\ell'}"] \arrow[uul,crossing over, bend right=50, "u_{\ell',i}"]\\ X_{\bar{e}} \arrow[u, swap, "\ell(\bar{e})"]\arrow[uuur,bend left=70, "\bar{e}_i"]\arrow[urr, "\ell'(\bar{e})" description]  &\cdots & X_{\bar{e'}} \arrow[u, "\ell'(\bar{e}')"]\arrow[uuul,bend right=70, swap, "\bar{e'}_i"] \end{tikzcd} \label{eq 5}
		\end{equation*}
		
		We define the interpretations of relation symbols on $Z$ as follows. For each relation symbol $R$ in  $\mathcal{L}$ of arity $r$, let $R^{\mathsf{Z}}$ be such that:
		$$R^\mathsf{Z}(\delta)\Leftrightarrow \text{there are } \ell\text{ a line in }P^N \text{ and } \eta\in\textnormal{Hom}_{\mathcal{C}}(Y,r), \text{ so that } \delta=f_{\ell}\circ \eta \text{ and } R^\mathsf{Y}(\eta).$$
		
		Clearly $R^\mathsf{Y}(\eta)$ implies $R^{\mathsf{Z}}(f_\ell \circ \eta)$. So to show that $f_\ell$ is a homomorphism it remains to prove that  $R^\mathsf{Z}(f_\ell \circ \eta)$ implies $R^\mathsf{Y}(\eta)$. So suppose $R^{\mathsf{Z}}(f_\ell\circ \eta)$, then by definition there is a $\eta'\in \text{Hom}_{\mathcal{C}}(Y,r)$ and a line $\ell'$ so that $R^\mathsf{Y}(\eta')$ and $f_{\ell'}\circ\eta'=f_{\ell}\circ \eta$. We will let $f_{\ell}\circ \eta=\delta$. We show that $R^\mathsf{Y}(\eta)$ by two cases. If there is  $i\in d(\ell)\cap d(\ell')$, then $$v_i\circ \delta=u_{\ell,i}\circ \eta=u_{\ell',i}\circ \eta'.$$
		Thus $\eta=\eta'$ by the definition of $u_{\ell,i}$.
		
		In the second case there is no $i\in d(\ell)\cap d(\ell')$ so let $i\in d(\ell)$ and $j\in d(\ell')$. Then by the definition of $v_{i}$,
		$$v_i\circ \delta=u_{\ell,i}\circ\eta=u_{\ell',i}\circ\eta'.$$
		Then by the definition of $u_{\ell,i}$ and $u_{\ell{'},i}$, we have $$\textnormal{Id}_{Y}\circ \eta= \ell'_i\circ h\circ \rho \circ \eta'.$$
		Note that by the definition of $\sigma$, $\rho=\sigma\circ f_{\ell'}$. So $$\eta=\ell'_{i}\circ h\circ \sigma \circ f_{\ell'}\circ \eta'.$$ Thus by the definition of $\delta$, $$\eta=\ell'_{i}\circ h\circ \sigma\circ \delta.$$
		We can construct an analogous argument by replacing $i$ with $j$. So by symmetry, $$\eta'=\ell_{j}\circ h\circ \sigma\circ \delta.$$
		Now since $\ell'_i$ and $\ell_j$ are homomorphisms, $$R^\mathsf{Y}(\eta)\Leftrightarrow R^{\mathsf{X}}(h\circ \sigma\circ \delta)\Leftrightarrow R^\mathsf{Y}(\eta').$$
		So since $R^\mathcal{Y}(\eta')$ holds by assumption, $R^\mathsf{Y}(\eta)$ holds. Thus $(\mathsf{Z},f_{\ell},f_{\bar{e}})_{\ell,\bar{e}\in \textnormal{Ob}(J)}$ is a cocone of the line diagram in $Bl_{i_0}$.

		Next we show that $(\mathcal{Z},f_{\ell},f_{\bar{e}})_{\ell,\bar{e}\in \textnormal{Ob}(J)}$ is the colimit over the line diagram in $Bl_{i_0}$. If $
		(\mathcal{W},g_{\ell},g_{\bar{e}})_{\ell,\bar{e}\in \textnormal{Ob}(J)}$ where $\mathcal{W}=(\mathsf{W},\tau)$ is a cocone over the line diagram $G$ in $Bl_{i_0}$, by the definition of $F^{\mathsf{Z}}$ and $\rho$ there is a unique morphism of cocones $f$ from $(Z,f_{\ell},f_{\bar{e}})_{\ell, \bar{e}\in\text{Ob}(J)}$ to $
		(\mathcal{W},g_{\ell},g_{\bar{e}})_{\ell,\bar{e}\in \textnormal{Ob}(J)}$ that preserves function symbols and so that $\rho=\tau\circ f$. Thus all that we need to show is that $f$ preserves relation symbols. Fix a relation symbol $R$ of arity $r$ in $\mathcal{L}$. Then by the definition of $R^{\mathsf{Z}}$ for any $\delta\in\textnormal{Hom}_{\mathcal{C}}(r,Z)$, $R^{\mathsf{Z}}(\delta)$ holds if and only if there is a line $\ell$ and $\eta\in \textnormal{Hom}_{\mathcal{C}}(r,Y)$ so that $f_{\ell}\circ\eta=\delta$ and $R^{\mathsf{Y}}(\eta)$. Then since $g_{\ell}$ is a homomorphism $R^{\mathsf{Y}}(\eta)$ if and only if $R^{\mathsf{W}}(g_{\ell}\circ \delta)$. Thus $R^{\mathsf{Z}}(\delta)$ if and only if $R^{\mathsf{W}}(f\circ f_{\ell}\circ \eta)=R^{\mathsf{W}}(f\circ \delta)$. Therefore $f\in\text{Hom}_{Bl_{i_0}}(\mathcal{W},\mathcal{Z})$, so  $(\mathcal{Z},f_{\ell},f_{\bar{e}})_{\ell,\bar{e}\in \textnormal{Ob}(J)}$ is the colimit over the line diagram in $Bl_{i_0}$.

	\end{proof}
	\section{Application to Ramsey theory}
	In this section, we apply Theorem 1 to obtain results in Ramsey theory. First we define the Ramsey property for categories. Then we give a general Transfer Lemma that uses cocones to transfer Ramsey properties between categories. Next we state the Partite Lemma and show how the Partite Lemma follows directly from Theorem 1 and the Transfer Lemma. We then give a categorical version of the Partite Construction. We apply our Partite Construction to prove the results in \cite{6} and \cite{5} in a unified manner.
	\subsection {Ramsey property}
	In this section we will give our notation for the standard ideas found in a categorical approach to Ramsey theory.
	\begin{definition}
		For all $r>0$, an $r$-\textbf{coloring} of a set $S$ is a function $\chi\colon S\to r$ where $r=\{0,\dots, r-1\}$. Any $R\subseteq S$ is $\chi$-\textbf{monochromatic} if $\chi(R)=\{i\}$ for some $i\in r$.
	\end{definition}
	Using the above notation we define the main property we consider.
	\begin{definition}
		Fix a category $\mathcal{C}$, if $A,B,C\in \textnormal{Ob}(\mathcal{C})$ and $r>0$, then we say $C$ is a Ramsey witness for $A$ and $B$ (denoted $C\to (B)^{A}_{r}$) if for any $r$-coloring $\chi$ of $\textnormal{Hom}_{\mathcal{C}}(A,C)$ there is  $f\in \textnormal{Hom}_{\mathcal{C}}(B,C)$ so that $f\circ \textnormal{Hom}_{\mathcal{C}}(A,B)$ is $\chi$-monochromatic. 
		
		A category $\mathcal{C}$ has the Ramsey property if for all $A,B\in \textnormal{Ob}(\mathcal{C})$ and for all $r>0$, there is $C\in \textnormal{Ob}(\mathcal{C})$ so that $C\to (B)^{A}_{r}$.
	\end{definition}

The standard example of a category with the Ramsey property is the category $(\textbf{Fin},\leq)$ whose objects are finite linear orders and where morphisms are increasing injections. The fact that $(\textbf{Fin},\leq)$ has the Ramsey property is equivalent to Ramsey's Theorem.
	\subsection{Transferring Ramsey Property Over Cocones}
	
Given a category $\mathcal{C}$ with the Ramsey property, a category $\mathcal{D}$, and a map $F\colon \mathcal{C}\to \mathcal{D}$ it is natural to consider when $\mathcal{D}$ to has the Ramsey property. This situation has already been examined in \cite[Proposition 6.4]{12} and in \cite[Lemma 3.1]{7}. In fact our Transfer Lemma is equivalent to ideas found in \cite[Proposition 6.4]{12}. The difference between these theorems and our Transfer Lemma is that we use the established idea of a cocone to transfer a Ramsey statement.

 In \cite[Lemma 3.1]{7} the author shows that if there is a certain map $\mathcal{C}\to \mathcal{D}$ and $\mathcal{C}$ has the Ramsey property then $\mathcal{D}$ has the Ramsey property, we will not be taking such a global approach. More precisely, if $A,B\in \text{Ob}(\mathcal{C})$, $r>0$, there is a $C\in \text{Ob}(\mathcal{C})$, $D,E\in \text{Ob}(\mathcal{D})$, and a certain $F\colon \text{Hom}_{\mathcal{C}}(A,B)\to \text{Hom}_{\mathcal{D}}(D,E)$ then there is a Ramsey witness for $D$ and $E$ in $\mathcal{D}$. This local approach allows us to use the Ramsey property of many different categories $\mathcal{C}$ to prove that our target category $\mathcal{D}$ has the Ramsey property. Our Transfer Lemma will show that there is a Ramsey witness for $D$ and $E$ if $F$ is surjective and there is a cocone in $\mathcal{D}$ over a certain diagram. First we define this diagram and then we prove the Transfer Lemma.
	\begin{definition}Let $\mathcal{C},\mathcal{D}$ be categories. Suppose there are $A,B,C\in \textnormal{Ob}(\mathcal{C})$, $D,E\in \textnormal{Ob}(\mathcal{D})$ and $F\colon \textnormal{Hom}_{\mathcal{C}}(A,B)\to \textnormal{Hom}_\mathcal{D}(D,E)$.
		
		Let $J$ be the category with $$\text{Ob}(J)=\textnormal{Hom}_\mathcal{C}(A,C)\cup\textnormal{Hom}_\mathcal{C}(B,C)$$ and the only non-identity morphisms in $J$ are of the form \\$\text{Hom}_{J}(h,g)=\{(h,g,f)\colon f\in \text{Hom}_{\mathcal{C}}(A,B)\text{ and } g\circ f=h\}$ where $h\in \text{Hom}_{\mathcal{C}}(A,C)$ and $g\in \text{Hom}_{\mathcal{C}}(B,C)$.
		
		Then the \textbf{transfer diagram} $H\colon J\to \mathcal{D}$ is defined on objects by $$H(f)=D \text{ for all }f\in \textnormal{Hom}_\mathcal{C}(A,C), \ H(g)=E\text{ for all }g\in \textnormal{Hom}_\mathcal{C}(B,C)$$ 
		and on non-identity morphisms by 
	 $H(h,g,f)=F(f).$
	\end{definition}
	
	Now that we have defined the transfer diagram we can state the Transfer Lemma.
	\begin{lemma}[Transfer Lemma]
		Fix $\mathcal{C},\mathcal{D}$ be categories and $r>0$. Suppose there are $A,B,C\in \textnormal{Ob}(\mathcal{C})$ so that $C\to (B)^{A}_r$, $D,E\in \textnormal{Ob}(\mathcal{D})$ and a surjection $F\colon \textnormal{Hom}_{\mathcal{C}}(A,B)\to \textnormal{Hom}_\mathcal{D}(D,E)$. If $\mathcal{D}$ has a cocone $W\in\textnormal{Ob}(\mathcal{D}$) over the transfer diagram, then  $W\to (E)^D_r$ in the category $\mathcal{D}$.
	\end{lemma}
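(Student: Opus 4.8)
The plan is to transport the given coloring from $\mathcal{D}$ back into $\mathcal{C}$, apply the Ramsey hypothesis there, and push the resulting monochromatic object back into $\mathcal{D}$ through the cocone. First I would record the data of the cocone $W$ over the transfer diagram $H$ as a family of legs $\lambda_h\in\textnormal{Hom}_{\mathcal{D}}(D,W)$ indexed by $h\in\textnormal{Hom}_{\mathcal{C}}(A,C)$ (the objects of $J$ sent to $D$) and $\lambda_g\in\textnormal{Hom}_{\mathcal{D}}(E,W)$ indexed by $g\in\textnormal{Hom}_{\mathcal{C}}(B,C)$ (the objects sent to $E$). Unwinding the cocone compatibility condition on the morphism $(h,g,f)$ of $J$, and using $H(h,g,f)=F(f)$, this amounts to the single identity $\lambda_g\circ F(f)=\lambda_h$ whenever $g\circ f=h$ in $\mathcal{C}$. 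Given an $r$-coloring $\chi$ of $\textnormal{Hom}_{\mathcal{D}}(D,W)$, I would pull it back along the legs out of $D$ to define an $r$-coloring $\chi'$ of $\textnormal{Hom}_{\mathcal{C}}(A,C)$ by $\chi'(h)=\chi(\lambda_h)$.

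Next I would apply the hypothesis $C\to(B)^A_r$ to $\chi'$. This yields a $\psi\in\textnormal{Hom}_{\mathcal{C}}(B,C)$ such that $\psi\circ\textnormal{Hom}_{\mathcal{C}}(A,B)$ is $\chi'$-monochromatic, say of color $c$, so that $\chi'(\psi\circ f)=c$ for every $f\in\textnormal{Hom}_{\mathcal{C}}(A,B)$. The claim is then that the single leg $\lambda_{\psi}\in\textnormal{Hom}_{\mathcal{D}}(E,W)$ witnesses $W\to(E)^D_r$. To verify this, let $\eta\in\textnormal{Hom}_{\mathcal{D}}(D,E)$ be arbitrary. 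Surjectivity of $F$ supplies an $f\in\textnormal{Hom}_{\mathcal{C}}(A,B)$ with $F(f)=\eta$. Setting $h=\psi\circ f\in\textnormal{Hom}_{\mathcal{C}}(A,C)$, the triple $(h,\psi,f)$ is a genuine morphism of $J$, so the cocone identity gives $\lambda_{\psi}\circ\eta=\lambda_{\psi}\circ F(f)=\lambda_h=\lambda_{\psi\circ f}$. Consequently $\chi(\lambda_{\psi}\circ\eta)=\chi(\lambda_{\psi\circ f})=\chi'(\psi\circ f)=c$, which is independent of $\eta$. Hence $\lambda_{\psi}\circ\textnormal{Hom}_{\mathcal{D}}(D,E)$ is $\chi$-monochromatic, as required.

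I do not expect a serious obstacle; the argument is essentially a bookkeeping of legs, and the real content is that the two hypotheses play exactly complementary roles. Surjectivity of $F$ is what guarantees that every morphism $D\to E$ is of the form $F(f)$, so that the monochromatic copy produced in $\mathcal{C}$ covers the whole of $\textnormal{Hom}_{\mathcal{D}}(D,E)$; and the cocone relation $\lambda_{\psi}\circ F(f)=\lambda_{\psi\circ f}$ is precisely the identity that rewrites the $\mathcal{D}$-composite $\lambda_{\psi}\circ\eta$ as the leg indexed by the $\mathcal{C}$-composite $\psi\circ f$, converting the problem into one the Ramsey property already solved. The only point demanding care is that the definition of $\textnormal{Hom}_J(h,g)$ makes $(h,\psi,f)$ a morphism of $J$ exactly when $h=\psi\circ f$, which is what licenses invoking the cocone condition at that triple; this is immediate from the construction of the transfer diagram.
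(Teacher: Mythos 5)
Your proposal is correct and follows essentially the same route as the paper's proof: pull the coloring back along the cocone legs indexed by $\textnormal{Hom}_{\mathcal{C}}(A,C)$, apply $C\to(B)^A_r$ in $\mathcal{C}$, and then use surjectivity of $F$ together with the cocone compatibility $\lambda_g\circ F(f)=\lambda_{g\circ f}$ to conclude that the chosen leg out of $E$ is $\chi$-monochromatic. The only differences are notational (your $\lambda_\psi$, $\eta$ versus the paper's $\phi_g$, $j$).
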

	\begin{proof}
		Let $A,B\in \mathcal{C}$, $D,E\in \mathcal{D}$, and $F\colon \textnormal{Hom}_{\mathcal{C}}(A,B)\to \text{Hom}_{\mathcal{D}}(D,E)$. Fix a cocone $(W,\phi_f)_{f\in \text{Ob}(J)}$ over the transfer diagram in $\mathcal{D}$. Thus by the definition of cocone we have the following commutative diagram,
		\begin{center}\begin{tikzcd}[column sep=small] &W \\ E_{g} \arrow[ur, swap, "\phi_g"] &\cdots& E_{g'} \arrow[ul, "\phi_{g'}"] \\ D_h \arrow[uur, bend left=60, "\phi_{f}", near start]\arrow[u, swap, "F(f)"]  &\cdots & D_{h'} \arrow[u, "F(f')"]\arrow[uul, bend right=60, swap, "\phi_{f'}", near start] \end{tikzcd}\end{center}
		
		Where we denote $H(g)$ by $E_g$ and $H(h)$ by $D_h$. We will use the above diagram commuting to show that  $W\to (E)^D_r$. \\
		
		Let $\chi\colon \text{Hom}_{\mathcal{D}}(D,W)\to r$ be a coloring. We define a coloring $\chi'\colon \text{Hom}_{\mathcal{C}}(A,C)\to r$ by  $\chi'(f)=\chi(\phi_f)$.

		Since $C\to (B)^{A}_r$, there is $g\in \text{Hom}_{\mathcal{C}}(B,C)$ so that $g\circ \text{Hom}_\mathcal{C}(A,B) \text{ is }\chi'  \text{-monochromatic}.$
		It remains to show that $\phi_g \circ \text{Hom}_\mathcal{D}(D,E) \text{ is }\chi \text{-monochromatic}$.
		
		Let $j\in \text{Hom}_\mathcal{D}(D,E)$, since $F$ is a surjection there is $h\in \text{Hom}_\mathcal{C}(A,B)$ so that $F(h)=j$. So by the definition of cocone, $$\phi_{g\circ h}=\phi_g\circ F(h)=\phi_g\circ j.$$
		Then by the definition of $\chi'$,
		$$\chi(\phi_g\circ j)=\chi(\phi_{g\circ h})=\chi'(g\circ h).$$
		Because $g\in \text{Hom}_{\mathcal{C}}(B,C)$ is $\chi'$-monochromatic, $\phi_g \circ \text{Hom}_\mathcal{D}(D,E) \text{ is }\chi \text{-monochromatic}.$
	\end{proof}
	
	\subsection{The Partite Lemma}
	In this section, we will state and prove the Partite Lemma. We show that Theorem 1 gives us precisely what is necessary to apply the Transfer lemma to the Hales--Jewett Theorem which will prove the Partite Lemma. In order to use the Transfer Lemma we define a category which we call the Hales--Jewett category and give a reformulation of the Hales--Jewett Theorem using the Hales--Jewett category. Then we prove the Partite Lemma by showing that a line diagram is a transfer diagram for the Hales--Jewett category.

	Fix a finite set $P$. We let $HJ(P)$ be the category with $\textnormal{Ob(HJ(P))}=\mathbb{N}$ and\\ $\text{Hom}_{\textnormal{HJ(P)}}(0,N)=P^N$, in particular $\text{Hom}_{\textnormal{HJ(P)}}(0,1)=P$. Define $\textnormal{Hom}_{\textnormal{HJ(P)}}(1,N)$ as the set of lines in $P^N$ and let all other morphisms be identities.   For all $p\in P$, $\ell$ a line in $P^N$ we define $\ell\circ p\in P^N$ by,
	$$(\ell\circ p)_i=\left\{\begin{array}{ll}
		p & \text{ if } i\in d(\ell)\\
		\ell_i & \text{otherwise}\\
	\end{array} \right.$$
	Note that for any line $\ell$ in $P^N$ and any $\bar{e}\in P^N$, $\bar{e}\in \ell$ if and only if $\ell\circ \ell(\bar{e})=\bar{e}$ in $HJ(P)$.
	
	In \cite{7} the author defines the Graham-Rothschild category, and the category HJ(P) fits nicely as a subcategory of the Graham-Rothschild category with some modification.
	With our terminology we can now give a reformulation of the Hales--Jewett Theorem.
	\begin{theorem}[Hales--Jewett] For all $r>0$ and for each finite set $P$, there is $N$ so that $N\to (1)^0_r$ in \textnormal{HJ(P)}. 
	\end{theorem}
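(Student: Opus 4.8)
The plan is to recognize that the categorical assertion $N \to (1)^0_r$ in $HJ(P)$ is, once unwound through the definitions of the hom-sets and composition in $HJ(P)$, nothing more than the classical Hales--Jewett Theorem, so that the proof reduces to a translation followed by an appeal to that theorem.

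First I would unpack each ingredient of the Ramsey-witness relation $N \to (1)^0_r$, taking $A=0$, $B=1$, $C=N$. By definition an $r$-coloring of $\textnormal{Hom}_{HJ(P)}(0,N)=P^N$ is exactly an $r$-coloring of the combinatorial cube $P^N$; a morphism in $\textnormal{Hom}_{HJ(P)}(1,N)$ is a line $\ell$ in $P^N$; and $\textnormal{Hom}_{HJ(P)}(0,1)=P$. The content of the witness relation is then the existence of a line $\ell$ for which the set $\ell \circ \textnormal{Hom}_{HJ(P)}(0,1)=\{\ell \circ p : p \in P\}$ is $\chi$-monochromatic.

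Next I would identify this set $\{\ell\circ p: p\in P\}$ with the set of points of $P^N$ lying on $\ell$. From the formula for $\ell\circ p$, the point $\ell\circ p$ agrees with $\ell_i$ off $d(\ell)$ and equals $p$ on $d(\ell)$, so it is constant on $d(\ell)$ and hence lies on $\ell$; conversely, the note preceding the statement records that $\bar{e}\in\ell$ if and only if $\ell\circ\ell(\bar{e})=\bar{e}$, so every $\bar{e}\in\ell$ has the form $\ell\circ p$ with $p=\ell(\bar e)$. Thus $\{\ell\circ p:p\in P\}$ is precisely the combinatorial line determined by $\ell$, and $N \to (1)^0_r$ asserts exactly that every $r$-coloring of $P^N$ admits a monochromatic combinatorial line.

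Finally, with the dictionary in hand, the existence of the required $N$ is immediate from the classical Hales--Jewett Theorem, which guarantees for every finite alphabet $P$ and every $r>0$ a dimension $N$ such that any $r$-coloring of $P^N$ contains a monochromatic combinatorial line. There is no substantive obstacle here; the only point warranting care is checking that composition in $HJ(P)$ correctly encodes ``the point of the line indexed by $p$,'' and that as $p$ ranges over $P$ the points $\ell\circ p$ sweep out the entire line, which holds because $d(\ell)$ is nonempty and $p \mapsto \ell\circ p$ is the natural parametrization of the line by its alphabet. Once this correspondence is confirmed, the theorem is a verbatim categorical restatement of Hales--Jewett.
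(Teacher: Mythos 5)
Your proposal is correct and matches the paper's intent exactly: the paper states this theorem as a reformulation of the classical Hales--Jewett Theorem and gives no proof, leaving implicit precisely the dictionary you spell out (colorings of $\textnormal{Hom}_{HJ(P)}(0,N)=P^N$, lines as $\textnormal{Hom}_{HJ(P)}(1,N)$, and $\{\ell\circ p : p\in P\}$ sweeping out the points of the line). Your explicit verification that composition in $HJ(P)$ parametrizes the line is the only content needed, and it is carried out correctly.
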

	
 The version of the Partite Lemma below is a unification of the main lemmas in \cite{6} and \cite {5}. 
	
	\begin{corollary}[Partite Lemma]
		Let $\mathcal{C}$ be a category so that for all $X,Y\in \textnormal{Ob}(\mathcal{C})$,\\ $\textnormal{Hom}_{\mathcal{C}}(X,Y)$ is finite and suppose that $\mathcal{C}$ has colimits over all diagrams $G\colon J\to \mathcal{C}$ where the index category $J$ is finite. Fix a $\mathcal{C}$-language $\mathcal{L}$ and let
		$i_0\in \text{Hom}(\mathcal{C})$ have a left inverse. Then for any $r>0$, any monic domain object $\mathcal{X}\in \text{Ob}(Bl^{m}_{i_0})$, and any codomain object $\mathcal{Y}\in \text{Ob}(Bl^{m}_{i_0})$ there is a $\mathcal{Z}\in \text{Ob}(Bl^{m}_{i_0})$ so that $\mathcal{Z}\to (\mathcal{Y})^{\mathcal{X}}_r$.
	\end{corollary}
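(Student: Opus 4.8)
The plan is to apply the Transfer Lemma with the Hales--Jewett category $\mathrm{HJ}(P)$ playing the role of $\mathcal{C}$ and the block category $Bl^m_{i_0}$ playing the role of $\mathcal{D}$, using Theorem~1 to supply the cocone that the Transfer Lemma requires. The key observation driving everything is that the line diagram $G$ from Theorem~1 and the transfer diagram $H$ from the Transfer Lemma have the same shape, so Theorem~1 produces exactly the cocone needed to run the transfer argument. Let me set up the correspondence explicitly.

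\begin{proof}[Proof sketch]
First I would fix $r>0$, a monic domain object $\mathcal{X}$, and a codomain object $\mathcal{Y}$ in $Bl^m_{i_0}$, and set $P=\mathrm{Hom}_{Bl^m_{i_0}}(\mathcal{X},\mathcal{Y})$. This $P$ is finite by the hypothesis that $\mathcal{C}$ has finite hom-sets, so the Hales--Jewett category $\mathrm{HJ}(P)$ is available. I would then apply the Hales--Jewett Theorem to $P$ and $r$ to obtain an $N$ with $N\to(1)^0_r$ in $\mathrm{HJ}(P)$; this is the $C\to(B)^A_r$ hypothesis of the Transfer Lemma, taking $A=0$, $B=1$, and $C=N$ in $\mathrm{HJ}(P)$. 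With these choices, $\mathrm{Hom}_{\mathrm{HJ}(P)}(A,C)=\mathrm{Hom}_{\mathrm{HJ}(P)}(0,N)=P^N$ and $\mathrm{Hom}_{\mathrm{HJ}(P)}(B,C)=\mathrm{Hom}_{\mathrm{HJ}(P)}(1,N)$ is the set of lines in $P^N$, while $\mathrm{Hom}_{\mathrm{HJ}(P)}(A,B)=\mathrm{Hom}_{\mathrm{HJ}(P)}(0,1)=P$.

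Next I would define the functor $F$ of the Transfer Lemma on the target side by setting $D=\mathcal{X}$, $E=\mathcal{Y}$, and $F\colon P\to\mathrm{Hom}_{Bl^m_{i_0}}(\mathcal{X},\mathcal{Y})$ to be the identity map, which is trivially a surjection since $P$ is by definition exactly $\mathrm{Hom}_{Bl^m_{i_0}}(\mathcal{X},\mathcal{Y})$. The crucial verification is that with these identifications the transfer diagram $H\colon J\to Bl^m_{i_0}$ coincides with the line diagram $G$ of Theorem~1 for the same data $N,i_0,\mathcal{X},\mathcal{Y}$. On objects both send tuples $\bar{e}\in P^N$ to $\mathcal{X}$ and lines $\ell$ to $\mathcal{Y}$. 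On morphisms, the transfer diagram's non-identity morphisms are triples $(\bar{e},\ell,p)$ with $p\in P$ and $\ell\circ p=\bar{e}$ in $\mathrm{HJ}(P)$; by the remark preceding the Hales--Jewett Theorem, $\ell\circ p=\bar{e}$ holds precisely when $\bar{e}\in\ell$ and $p=\ell(\bar{e})$, so these are exactly the morphisms $(\bar{e},\ell)$ of the index category $J$ in the line diagram, and $H(\bar{e},\ell,p)=F(p)=\ell(\bar{e})=G(\bar{e},\ell)$. Thus the two diagrams are literally equal.

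Having matched the diagrams, I would invoke Theorem~1: since $\mathcal{C}$ has colimits over finite index categories (and $J$ is finite because $P^N$ is finite) and $i_0$ has a left inverse, Theorem~1 provides a colimit $\mathcal{Z}$ over $G$ in $Bl_{i_0}$ that is simultaneously a cocone over $G$ in $Bl^m_{i_0}$. This cocone $\mathcal{Z}\in\mathrm{Ob}(Bl^m_{i_0})$ is exactly the cocone $W$ over the transfer diagram that the Transfer Lemma demands. Applying the Transfer Lemma then yields $\mathcal{Z}\to(\mathcal{Y})^{\mathcal{X}}_r$ in $Bl^m_{i_0}$, which is the desired conclusion. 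The main obstacle, and the step deserving the most care, is confirming that the hypotheses of Theorem~1 and of the Transfer Lemma are being fed compatible data---in particular that the two index categories $J$ (one defined for the line diagram, one for the transfer diagram) are identified correctly through the remark relating $\ell\circ\ell(\bar{e})=\bar{e}$ in $\mathrm{HJ}(P)$ to membership $\bar{e}\in\ell$, since a mismatch there would break the equality $H=G$ on which the whole argument rests.
\end{proof}
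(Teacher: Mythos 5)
Your proposal is correct and follows essentially the same route as the paper: set $P=\mathrm{Hom}_{Bl^m_{i_0}}(\mathcal{X},\mathcal{Y})$, invoke the Hales--Jewett Theorem for $HJ(P)$, take $F$ to be the identity surjection, check that the transfer diagram coincides with the line diagram via the equivalence $\ell\circ p=\bar{e}\Leftrightarrow(\bar{e}\in\ell\text{ and }p=\ell(\bar{e}))$, and feed the cocone from Theorem~1 into the Transfer Lemma. You also correctly handle the one point needing care---that Theorem~1 nominally assumes all colimits but its proof only uses the colimit over the finite index category $J$---which is exactly the observation the paper makes.
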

	\begin{proof}
		Fix $r>0$, $\mathcal{X}$ a monic domain object in $Bl^{m}_{i_0}$, and $\mathcal{Y}$ a codomain object in $Bl^{m}_{i_0}$. First we show that we can still apply Theorem 1 even though $\mathcal{C}$ no longer has all colimits. Note that the proof of Theorem 1 only used the fact that $\mathcal{C}$ had colimits over diagrams with index category $J$ where the $\text{Ob}(J)=\text{Hom}_{Bl^{m}_{i_0}}(\mathcal{X},\mathcal{Y})^N\cup \{\ell\colon \ell \text{ is a line in }\text{Hom}_{Bl^{m}_{i_0}}(\mathcal{X},\mathcal{Y})^N\}$ for some $N$. Since by assumption on $\mathcal{C}$ the set $\textnormal{Hom}_{\mathcal{C}}(X,Y)$ is finite, $\textnormal{Hom}_{Bl^{m}_{i_0}}(\mathcal{X},\mathcal{Y})$ is finite. Thus the set of objects in $J$ is finite, so we can apply Theorem 1. Thus all line diagrams have cocones in $Bl_{i_0}$.\\
	 If $P=\textnormal{Hom}_{Bl^{m}_{i_0}}(\mathcal{X},\mathcal{Y},\rho)$, then $P$ is finite so by the Hales--Jewett Theorem there is a $N\to (1)^{0}_{r}$ in $HJ(P)$. Our goal is to apply the Transfer Lemma to $\text{Id}\colon \text{Hom}_{HJ(P)}(0,1)\to \text{Hom}_{Bl^{m}_{i_0}}(\mathcal{X},\mathcal{Y})$. To do so we show the transfer diagram  is the line diagram. Note that the transfer diagram has index $J$ where $$\text{Ob}(J)=\text{Hom}_{HJ(P)}(0,N)\cup \text{Hom}_{HJ(P)}(1,N)=P^N\cup \{\ell\colon \ell\text{ is a line in }P^N\}$$ and the non-identity morphisms are of the form \\$\text{Hom}_{J}(\bar{e},\ell)=\{(\bar{e},\ell,p)\colon p\in P\text{ so that } \ell\circ p=\bar{e}\}$ where $\bar{e}\in P^N$ and $\ell$ is a line in $P^N$. Then for all lines $\ell$ and $\bar{e}\in P^N$, if there is a $p\in P$ such that $p\circ \ell=\bar{e}$, then $\bar{e}\in \ell$ and $\ell(\bar{e})=p$. Thus the only non-identity morphisms in $J$ are $\text{Hom}_{J}(\bar{e},\ell)=\{(\bar{e},\ell,\ell(\bar{e}))\}$ if $\bar{e}\in \ell$. Then the transfer diagram is $G\colon J\to Bl^{m}_{i_0}$ defined by  $$G(\bar{e})=
		\mathcal{X} \textnormal{ if }\bar{e}\in  P^N$$
		$$G(\ell)=\mathcal{Y} \text{ if }\ell \text{ is a line in }P^N $$
		and on non-identity morphisms  $G(\bar{e},\ell,\ell(\bar{e}))=\ell(\bar{e})$. Thus the transfer diagram is the line diagram.
		
		. \end{proof}
	\subsection{The Partite Construction}
	In this section, we expand the Partite Lemma as stated in Section 5.3 from a result in $Bl^{m}_{i_0}$ to a larger category that we call $Bl_{\mathcal{D}}$. The category $Bl_{\mathcal{D}}$, that we define precisely below, is a subcategory of $Bl$. We need to cut down from $Bl$ to $Bl_{\mathcal{D}}$ since $Bl$ does not have an analog of the Partite Lemma while we prove a version of the Partite Lemma for $Bl_{\mathcal{D}}$ below. To define $Bl_{\mathcal{D}}$ we consider a new category $\mathcal{D}$ that will have the Ramsey property and a functor $G\colon \mathcal{D}\to \mathcal{C}$.  This category $\mathcal{D}$ is analogous to the category of finite linear orders in the Ne\v{s}et\v{r}il --R\"{o}dl Theorem.  
	\begin{definition}
		Let $\mathcal{C},\mathcal{D}$ be categories, $G\colon \mathcal{D}\to \mathcal{C}$ be a functor, and $\mathcal{L}$ be a $\mathcal{C}$-language.  A $\mathcal{D}$-\textbf{block} is a triple $\mathtt{X}=(\mathsf{X},\pi,K)$ where $\mathsf{X}$ is an $\mathcal{L}$-structure, $K\in\textnormal{Ob}(\mathcal{D})$, and $\pi\in \textnormal{Hom}_{\mathcal{C}}(X,G(K))$.\end{definition} 
	For ease of notation if $\mathcal{X}=(\mathsf{X},\pi)$ we denote the $\mathcal{D}$-block $(\mathsf{X},\pi,K)$ by $(\mathcal{X},K)$. We will also use the letters $\mathtt{X},\mathtt{Y},\mathtt{Z}$ to denote $\mathcal{D}$-blocks with first coordinate $\mathcal{X},\mathcal{Y},\mathcal{Z}$.
	
	We define a category $Bl^{m}_{\mathcal{D}}$ of $\mathcal{D}$-blocks. Let $\text{Ob}(Bl^{m}_{\mathcal{D}})$ be $\mathcal{D}$-blocks and if\\ $\mathtt{X}=(\mathcal{X},K),\mathtt{Y}=(\mathcal{Y},L)\in \text{Ob}(Bl_{\mathcal{D}})$, 
	then $$\textnormal{Hom}_{Bl_{\mathcal{D}}}(\mathtt{X},\mathtt{Y})=\{f\in \textnormal{Hom}_{\mathcal{C}}(X,Y)\colon \text{there is a }i\in \text{Hom}_{\mathcal{D}}(K,L) \text{ so that }f\in \text{Hom}(Bl^{m}_{G(i)}) \}$$

	We now  expand the Partite Lemma to the category $Bl^{m}_{\mathcal{D}}$.
	\begin{corollary}[Partite Construction]
		Let $\mathcal{C}$ and $\mathcal{D}$ be categories and let $G\colon \mathcal{D}\to \mathcal{C}$ be a functor so that the following hold: 
		\begin{enumerate}
			\item For all $X,Y\in \textnormal{Ob}(\mathcal{C})$, $\textnormal{Hom}_{\mathcal{C}}(X,Y)$ is finite. Similarly, for all $K,L\in \textnormal{Ob}(\mathcal{D})$, $\textnormal{Hom}_{\mathcal{D}}(K,L)$ is finite. 	
			
			\item For all $f\in \textnormal{Hom}(\mathcal{D})$, $G(f)$ has a left inverse in $\mathcal{C}$.
			\item If $F\colon J\to \mathcal{C}$ is a functor where $\textnormal{Ob} (J)$ is finite, then $\mathcal{C}$ has a colimit over $F$.
			\item $\mathcal{D}$ has the Ramsey property.
		\end{enumerate}	 
	 Fix a $\mathcal{C}$-language $\mathcal{L}$. Then for any $r>0$, any monic  $\mathtt{X}\in \text{Ob}(Bl^{m}_{\mathcal{D}})$, and any $\mathtt{Y}\in \text{Ob}(Bl^{m}_{\mathcal{D}})$ there is a $\mathtt{Z}\in \text{Ob}(Bl^{m}_{\mathcal{D}})$ so that $\mathtt{Z}\to (\mathtt{Y})^{\mathtt{X}}_r$.	
	\end{corollary}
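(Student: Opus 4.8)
The plan is to run the classical Ne\v{s}et\v{r}il--R\"{o}dl partite construction inside $Bl^{m}_{\mathcal{D}}$, using the preceding Partite Lemma as the inductive step and the Ramsey property of $\mathcal{D}$ as the base case. Write $\mathtt{X}=(\mathcal{X},K)$ and $\mathtt{Y}=(\mathcal{Y},L)$ with $\mathcal{X}=(\mathsf{X},\pi)$ and $\mathcal{Y}=(\mathsf{Y},\rho)$. First I would invoke hypothesis (4) to obtain $M\in\textnormal{Ob}(\mathcal{D})$ with $M\to (L)^{K}_{r}$; the target block $\mathtt{Z}$ will live over $M$. Since $\textnormal{Hom}_{\mathcal{D}}(L,M)$ is finite by (1) and each $G(i)$ is a split monomorphism by (2), I would build a generic picture $\mathtt{P}_0$ over $M$ by amalgamating, over the base $G(M)$, one copy of $\mathcal{Y}$ placed over $M$ via $i$ for every $i\in\textnormal{Hom}_{\mathcal{D}}(L,M)$. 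The required colimit exists by (3), and using the sections furnished by (2) together with the monicity of $\mathcal{Y}$ one checks that $\mathtt{P}_0$ is a monic block over $G(M)$ admitting a block-monomorphism $\mathtt{Y}\to\mathtt{P}_0$ over each $i$.

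Next I would enumerate $\textnormal{Hom}_{\mathcal{D}}(K,M)=\{\mu_1,\dots,\mu_N\}$, which is finite by (1), and construct a chain $\mathtt{P}_0,\mathtt{P}_1,\dots,\mathtt{P}_N$ of codomain objects over $G(M)$. At stage $t$ I set $i_0=G(\mu_t)$, which has a left inverse by (2), and apply the Partite Lemma to the monic domain object $\mathtt{X}$ (a monic block over $G(K)$) and the codomain object $\mathtt{P}_{t-1}$ (a monic block over $G(M)$) with $r$ colors, letting $\mathtt{P}_t$ be the resulting witness, so that $\mathtt{P}_t\to(\mathtt{P}_{t-1})^{\mathtt{X}}_{r}$ in $Bl^{m}_{G(\mu_t)}$. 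No separate re-amalgamation step is needed, since the Partite Lemma already delivers $\mathtt{P}_t$ as a block over $G(M)$ together with copies $\mathtt{P}_{t-1}\to\mathtt{P}_t$; crucially these copies are $\textnormal{Id}_{G(M)}$-homomorphisms, so they preserve the part-morphism over which any copy of $\mathtt{X}$ sits. I then set $\mathtt{Z}=\mathtt{P}_N$.

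To verify $\mathtt{Z}\to(\mathtt{Y})^{\mathtt{X}}_r$, given an $r$-coloring $\chi$ of $\textnormal{Hom}_{Bl^{m}_{\mathcal{D}}}(\mathtt{X},\mathtt{Z})$ I would run a backward induction through the chain. Pulling $\chi$ back along the copy $\mathtt{P}_{N-1}\to\mathtt{P}_N$ selected by the Partite Lemma at $\mu_N$ makes the copies of $\mathtt{X}$ over $G(\mu_N)$ monochromatic; repeating at $\mu_{N-1},\dots,\mu_1$ and using that each correcting map is an $\textnormal{Id}_{G(M)}$-homomorphism, hence fixes part-morphisms, shows that the corrections do not interfere, so the pulled-back coloring on the resulting copy of $\mathtt{P}_0$ inside $\mathtt{Z}$ is monochromatic on the copies of $\mathtt{X}$ over every $G(\mu_t)$ simultaneously. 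Consequently this coloring factors through the projection to $G(M)$ and descends to a coloring $\bar{\chi}\colon\textnormal{Hom}_{\mathcal{D}}(K,M)\to r$. Applying $M\to(L)^{K}_{r}$ yields $i\in\textnormal{Hom}_{\mathcal{D}}(L,M)$ with $i\circ\textnormal{Hom}_{\mathcal{D}}(K,L)$ being $\bar{\chi}$-monochromatic; the copy of $\mathtt{Y}$ over $i$ inside the copy of $\mathtt{P}_0$, pushed into $\mathtt{Z}$ along the composite copy map, then has all of its copies of $\mathtt{X}$ being $\chi$-monochromatic, which is the desired conclusion.

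The main obstacle is the correct construction of the generic picture $\mathtt{P}_0$ and the verification that the entire chain stays inside $Bl^{m}_{\mathcal{D}}$: one must check that the base amalgam is again a monic block over $G(M)$, that the requisite copies of $\mathtt{Y}$ survive as block-monomorphisms, and that at each stage $\mathtt{P}_{t-1}$ genuinely qualifies as a codomain object of $Bl^{m}_{G(\mu_t)}$ so that the Partite Lemma applies. The conceptual heart, however, is the non-interference in the backward induction, which hinges entirely on the fact that the copies produced by the Partite Lemma are $\textnormal{Id}_{G(M)}$-homomorphisms and therefore respect the part-morphism a copy of $\mathtt{X}$ lies over; this is exactly what allows the local, one-part-morphism-at-a-time Partite Lemma to be assembled into the global Ramsey statement in $Bl^{m}_{\mathcal{D}}$.
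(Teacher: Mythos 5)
Your proposal is correct and follows essentially the same route as the paper's proof: the same generic picture over $G(M)$ built as a colimit of copies of $\mathcal{Y}$ indexed by $\textnormal{Hom}_{\mathcal{D}}(L,M)$, the same chain of Partite Lemma applications enumerated by $\textnormal{Hom}_{\mathcal{D}}(K,M)$, the same non-interference argument resting on the correcting maps being $\textnormal{Id}_{G(M)}$-homomorphisms, and the same final descent to a coloring of $\textnormal{Hom}_{\mathcal{D}}(K,M)$ resolved by the Ramsey property of $\mathcal{D}$. The only cosmetic deviation is your insistence that the generic picture $\mathtt{P}_0$ be monic (invoking monicity of $\mathcal{Y}$, which is not assumed); this is unnecessary, since the Partite Lemma requires monicity only of the domain object $\mathtt{X}$.
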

	The proof of Corollary 5 follows ideas standard in the field of structural Ramsey theory, for example see \cite{10}, though we specifically follow the formulations in \cite{6} and \cite{5}.
	\begin{proof}
		Let $\mathtt{X}=(\mathsf{X},\pi,K)\in \textnormal{Ob}(Bl_{\mathcal{D}})$ be monic,  
		$\mathtt{Y}=(\mathsf{Y},\rho,L)\in\textnormal{Ob}(Bl_{\mathcal{D}})$, and let $r>0$. By the Ramsey property of $\mathcal{D}$ there is $M\in \mathcal{D}$ so that $M\to (L)^{K}_r$ in $\mathcal{D}$. We now define a block $(\mathcal{Y}_0,\sigma_0,M)$ where $Y_0$ is defined as the categorical disjoint sum of copies of $Y$.  In particular let $J$ be the category with $\text{Ob}(J)=\text{Hom}_{\mathcal{D}}(L,M)$ and the only  morphisms of $J$ are identities. Let $H\colon J\to \mathcal{C}$ be defined by $H(i)=Y$ for all $i\in \text{Ob}_J$. Let $(Y_0,h_i)_{i\in \text{Hom}_{\mathcal{D}}(L,M)}$ be the colimit over $H$ in $\mathcal{C}$. 
		
		Note that since the only morphisms in $J$ are identities, for any $W\in \text{Ob}(\mathcal{C})$ and any collection $(f_i)_{i\in\text{Hom}_{\mathcal{D}(L,M)}}$ where $f_i\in \text{Hom}_{\mathcal{C}}(Y,W)$, $(W,f_i)_{i\in \text{Hom}_{\mathcal{D}}(L,M)}$ is a cocone over $H$. In particular $(G(M),G(i)\circ \rho)_{i\in \text{Hom}_{\mathcal{D}}(L,M)}$ is a cocone over $H$. Thus by the definition of colimit there is $\rho_0\in \text{Hom}_{\mathcal{C}}(Y_0,G(M))$ so that $\rho_0\circ h_i=G(i)$ for all $i\in \text{Hom}_{\mathcal{D}}(L,M)$.
		
		Let $\gamma\in\text{Hom}_{\mathcal{C}}(Y_0,r)$ and $F$ be a function symbol in $\mathcal{L}$ of arity $(q,s)$, then consider the cocone $(s,F^{\mathcal{Y}}(\gamma\circ h_i))_{i\in \text{Hom}_{\mathcal{D}}(L,M)}$. By the definition of colimit there is a $F^{\mathsf{Y}_0}(\gamma)$ so that $F^{\mathsf{Y}_0}(\gamma)\circ h_i=F^{\mathsf{Y}}(\gamma\circ h_i)$ for all $i\in \text{Hom}_{\mathcal{D}}(L,M)$. Now given a relation symbol $R$ in $\mathcal{L}$ of arity $q$ and $\delta\in \text{Hom}_{\mathcal{C}}(q,Y_0)$ define $R^{\mathsf{Y}_0}$ so that, $$R^{\mathsf{Y}_0}(\delta)\Leftrightarrow \text{ there are }\eta\in \text{Hom}_{\mathcal{C}}(q,Y),i\in \text{Hom}_{\mathcal{D}}(L,M) \text{ so that }h\delta=h_i\circ \eta \text{ and }R^{\mathcal{Y}}(\eta).$$ 
		It is easy to check that each $h_i$ is a $G(i)$-monomorphism.\\ 
		
		We enumerate $\text{Hom}_{\mathcal{D}}(K,L)$ by letting $\text{Hom}_{\mathcal{D}}(K,M)=\{j_k\colon k<n\}$. Then recursively define $(\mathcal{Y}_k,M)$ by the Partite Lemma so that $\mathcal{Y}_{k+1}\to (\mathcal{Y}_k)^{\mathcal{X}}_r$ in $Bl_{j_k}$. We show that $\mathsf{Z}=(\mathcal{Y}_{n},M)$ satisfies the Ramsey property.\\
		
		Fix a coloring $\chi\colon \text{Hom}_{Bl_{\mathcal{D}}}(\mathsf{X},\mathsf{Z})\to r$. We define morphisms $g_k$ recursively by the Partite Lemma so that $g_{n}\circ \cdots\circ g_{k}\circ \text{Hom}_{Bl_{j_k}}(\mathcal{X},\mathcal{Y}_k)$ is $\chi$-monochromatic.  We claim that if $g=g_n\circ \cdots \circ g_1$ then for all $f\in \text{Hom}_{Bl_{\mathcal{D}}}(G(K),Y_0)$, $\chi(g\circ f)$ depends only on the $j_k\colon K\to M$ so that $\rho_0\circ f=i_k\circ \pi$. To prove the claim suppose that $f$ is a $j_k$-monomorphism. 
		So since $g_l$ is a $\text{Id}_M$-monomorphism for all $l<k$, $g_{k-1}\cdots g_1(f)$ is a $j_k$-monomorphism. Then by the definition of $g_k$, $g_n\cdots g_k \circ g_{k-1}\cdots g_1\circ f$ is a fixed color which proves the claim.  Then define a coloring $\chi'$ so that $$\chi'\colon \text{Hom}_{\mathcal{D}}(K,M)\to r, \ \chi'(j_k)=\chi(g\circ f)\text { if }\rho_0\circ f=i_k\circ \pi$$
		
		Then by the Ramsey property of $\mathcal{D}$ there is $i_0\in \text{Hom}_{\mathcal{D}}(L,M)$ so that $i_0\circ \text{Hom}_{\mathcal{D}}(K,L)$ is $\chi'$-monochromatic. Then since $h_{i_0}$ is a $G(i_0)$-monomorphism, $g_n\circ \cdots\circ g_0\circ h_{i_0}$ witnesses that $\mathtt{Z}\to (\mathtt{Y})^{\mathtt{X}}_r$. \end{proof}
	\subsection{The Theorems of Solecki}
	We will give the results of \cite{6} and \cite{5} as corollaries of the Partite Construction. Thus we will show that the results in \cite{6} and \cite{5} can be proven in a unified manner.
	
	First we give some notation so that we can reformulate the results in \cite{6} and \cite{5} into our context. Let $\mathcal{C},\mathcal{D}$ be categories, $G\colon \mathcal{D}\to \mathcal{C}$ be a functor, and $\mathcal{L}$ be a $\mathcal{C}$-language. Then $(\mathcal{L},\mathcal{D})$ is the category where objects are $\mathcal{L}$-structures of the form $G(K)$ where $K\in \mathcal{D}$ which we denote by $\mathsf{K}$ and if $\mathsf{K},\mathsf{M}\in \textnormal{Ob}(\mathcal{L},\mathcal{D})$ then $$\text{Hom}_{(\mathcal{L},\mathcal{D})}(\mathsf{K},\mathsf{M})=\{G(i)\colon i\in \text{Hom}_{\mathcal{D}}(K,M)\text{ and }G(i)\text{ is a homomorphism}\}$$
	With this notation we state and prove the main result in \cite{5}.
	\begin{corollary}[Solecki,\cite{5}]
		Let $G\colon (\textnormal{\textbf{Fin}},\leq)\to \textnormal{\textbf{Fin}}$ be defined by $G(K,\leq)=K$ and $G(f)=f$. For any $\textnormal{\textbf{Fin}}$-language $\mathcal{L}$ the category $(\mathcal{L},(\textnormal{\textbf{Fin}},\leq))$ has the Ramsey property.
	\end{corollary}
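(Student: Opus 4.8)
The plan is to derive the statement directly from the Partite Construction (Corollary 5), applied with $\mathcal{C}=\textbf{Fin}$, $\mathcal{D}=(\textbf{Fin},\leq)$, and $G$ the order-forgetting functor of the statement. First I would check the four hypotheses of Corollary 5. Finiteness of the hom-sets holds because $\textnormal{Hom}_{\textbf{Fin}}(X,Y)$ consists of functions between finite sets and $\textnormal{Hom}_{(\textbf{Fin},\leq)}(K,L)$ of increasing injections between finite linear orders, both finite. Every morphism of $(\textbf{Fin},\leq)$ is an increasing injection, and an injection of finite sets (with nonempty domain) splits, so $G(f)$ always has a left inverse in $\textbf{Fin}$, giving hypothesis (2). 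Hypothesis (3) holds since $\textbf{Fin}$ is cocomplete, hence has all finite colimits. Finally, hypothesis (4) is exactly the assertion that $(\textbf{Fin},\leq)$ has the Ramsey property, which is Ramsey's Theorem as noted after the definition of the Ramsey property.

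Next I would record the elementary fact that $(\mathcal{L},(\textbf{Fin},\leq))$ is isomorphic to the full subcategory of $Bl^{m}_{\mathcal{D}}$ whose objects are the $\mathcal{D}$-blocks with identity projection, via $\mathsf{K}\mapsto(\mathsf{K},\textnormal{Id}_K,K)$. Unwinding the definition of $Bl^{m}_{\mathcal{D}}$, a morphism $(\mathsf{K},\textnormal{Id}_K,K)\to(\mathsf{L},\textnormal{Id}_L,L)$ is an $\mathcal{L}$-homomorphism $f\colon K\to L$ with a left inverse for which $\textnormal{Id}_L\circ f=G(i)\circ\textnormal{Id}_K$ for some $i\in\textnormal{Hom}_{(\textbf{Fin},\leq)}(K,L)$; this forces $f=G(i)=i$, so these morphisms are precisely the increasing injections that are $\mathcal{L}$-homomorphisms, i.e. $\textnormal{Hom}_{(\mathcal{L},(\textbf{Fin},\leq))}(\mathsf{K},\mathsf{L})$. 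In particular $\textnormal{Hom}_{Bl^{m}_{\mathcal{D}}}(\mathtt{K},\mathtt{L})=\textnormal{Hom}_{(\mathcal{L},(\textbf{Fin},\leq))}(\mathsf{K},\mathsf{L})$ for blocks with identity projection.

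Now fix $\mathsf{K},\mathsf{L}\in\textnormal{Ob}(\mathcal{L},(\textbf{Fin},\leq))$ and $r>0$, set $\mathtt{X}=(\mathsf{K},\textnormal{Id}_K,K)$ (a monic $\mathcal{D}$-block) and $\mathtt{Y}=(\mathsf{L},\textnormal{Id}_L,L)$, and apply Corollary 5 to obtain a $\mathcal{D}$-block $\mathtt{Z}=(\mathsf{Z},\sigma,M)$ with $\mathtt{Z}\to(\mathtt{Y})^{\mathtt{X}}_r$ in $Bl^{m}_{\mathcal{D}}$. The candidate witness is $\mathsf{Z}$ itself, regarded as an object of $(\mathcal{L},(\textbf{Fin},\leq))$ by equipping its underlying set $Z$ with the linear order $\leq_Z$ that orders the fibers of $\sigma$ according to $\leq_M$ and breaks ties inside each fiber arbitrarily; this makes $\sigma\colon(Z,\leq_Z)\to M$ monotone and exhibits $\mathsf{Z}$ as the $\mathcal{L}$-structure on $G(Z,\leq_Z)=Z$. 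The key observation is that every block monomorphism $f\colon\mathtt{K}\to\mathtt{Z}$ is then an increasing injection in $(\mathcal{L},(\textbf{Fin},\leq))$: it is an injective $\mathcal{L}$-homomorphism for which $\sigma\circ f$ is an increasing injection into $M$, so $f$ sends $\leq_K$-increasing pairs into distinct fibers in strictly $\leq_M$-increasing order, and monotonicity of $\sigma$ together with the fiberwise order forces $f$ to be $\leq_Z$-increasing. Hence $\textnormal{Hom}_{Bl^{m}_{\mathcal{D}}}(\mathtt{K},\mathtt{Z})\subseteq\textnormal{Hom}_{(\mathcal{L},(\textbf{Fin},\leq))}(\mathsf{K},\mathsf{Z})$, and the same argument gives $\textnormal{Hom}_{Bl^{m}_{\mathcal{D}}}(\mathtt{L},\mathtt{Z})\subseteq\textnormal{Hom}_{(\mathcal{L},(\textbf{Fin},\leq))}(\mathsf{L},\mathsf{Z})$.

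To finish I would transfer the coloring. Given $\chi\colon\textnormal{Hom}_{(\mathcal{L},(\textbf{Fin},\leq))}(\mathsf{K},\mathsf{Z})\to r$, restrict it along the inclusion above to a coloring of $\textnormal{Hom}_{Bl^{m}_{\mathcal{D}}}(\mathtt{K},\mathtt{Z})$ and apply $\mathtt{Z}\to(\mathtt{Y})^{\mathtt{X}}_r$ to obtain $F\in\textnormal{Hom}_{Bl^{m}_{\mathcal{D}}}(\mathtt{L},\mathtt{Z})$ with $F\circ\textnormal{Hom}_{Bl^{m}_{\mathcal{D}}}(\mathtt{K},\mathtt{L})$ monochromatic. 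Since $\textnormal{Hom}_{Bl^{m}_{\mathcal{D}}}(\mathtt{K},\mathtt{L})=\textnormal{Hom}_{(\mathcal{L},(\textbf{Fin},\leq))}(\mathsf{K},\mathsf{L})$ by the second paragraph, and each composite $F\circ g$ is again a block monomorphism $\mathtt{K}\to\mathtt{Z}$ lying in the subset on which $\chi$ and its restriction agree, the family $F\circ\textnormal{Hom}_{(\mathcal{L},(\textbf{Fin},\leq))}(\mathsf{K},\mathsf{L})$ is $\chi$-monochromatic; moreover $F\in\textnormal{Hom}_{(\mathcal{L},(\textbf{Fin},\leq))}(\mathsf{L},\mathsf{Z})$. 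Therefore $\mathsf{Z}\to(\mathsf{L})^{\mathsf{K}}_r$, as required. I expect the main obstacle to be exactly this identification: choosing the order on $Z$ so that block monomorphisms into $\mathtt{Z}$ are forced to be order-embeddings, and confirming that the monochromatic family produced by the Partite Construction consists of genuine $(\mathcal{L},(\textbf{Fin},\leq))$-morphisms colored consistently with $\chi$; by contrast the verification of the four hypotheses of Corollary 5 is routine.
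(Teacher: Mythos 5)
Your proposal is correct and follows essentially the same route as the paper: verify hypotheses (1)--(4) of the Partite Construction for $\mathcal{C}=\textbf{Fin}$, $\mathcal{D}=(\textbf{Fin},\leq)$ and the order-forgetting functor, apply it to the blocks $(\mathsf{K},\textnormal{Id}_K,K)$ and $(\mathsf{L},\textnormal{Id}_L,L)$, and then linearly order $Z$ so that the projection becomes weakly increasing, which is exactly the paper's concluding step. Your write-up in fact supplies details the paper leaves implicit (the identification of $(\mathcal{L},(\textbf{Fin},\leq))$ with the blocks having identity projection, and the check that block monomorphisms into $\mathtt{Z}$ are forced to be order-embeddings); the only slip is calling $\textbf{Fin}$ cocomplete --- it is not, but it does have all finite colimits, which is all that hypothesis (3) requires.
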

	Note that the category $(\mathcal{L},(\textnormal{\textbf{Fin}},\leq))$ is the category whose objects are linearly ordered structures and morphisms are increasing injective homomorphisms. Thus Corollary 6 is an expansion of the Ne\v{s}et\v{r}il--R\"{o}dl Theorem.
	\begin{proof}
		Let $\mathsf{K},\mathsf{M}\in \textnormal{Ob}(\mathcal{L},(\textbf{Fin},\leq))$ and $r>0$.  We view $\mathsf{K}$ and $\mathsf{M}$ as the $(\textbf{Fin},\leq)$-blocks $\mathsf{K}=(\mathsf{K},\text{Id}_{G(K)},K)$ and $\mathsf{M}=(\mathsf{M},\text{Id}_M,M)$ respectively.  Next we show that $\textbf{Fin}$, $(\textbf{Fin},\leq)$, and $G$ satisfy conditions (1)-(4) for the Partite Construction. The only property that is not clear is that the category \textbf{Fin} has colimits over diagrams where the index category $J$ has a finite set of objects. This is a standard result in category theory. The colimit over diagram $F$, is $$Z=\bigsqcup_{S\in \text{Ob}(J)}F(S)/_{\sim'}$$ where $\sim'$ is the transitive closure of the relation given by $$(s,F(S))\sim(t, F(T))\Leftrightarrow \text{ there is an } f\in J,f\colon S\to T, F(f)(s)=t,$$
		and $\phi_S\colon F(S)\to Z$ is given by $\phi_S(s)=[s]$.\\
		
		By the Partite Construction there is a $\mathtt{Z}=(\mathsf{Z},\pi,L)$ so that $\mathtt{Z}\to (\mathsf{M})^{\mathsf{K}}_r$ in $Bl_{(\textbf{Fin},\leq)}$. We order $Z$ so that $\pi$ becomes a weakly increasing map, then 
		 $\mathsf{Z}\to (\mathsf{M})^{\mathsf{K}}_r$ in $(\mathcal{L},(\textbf{Fin},\leq))$.
	\end{proof}
	For the main result in \cite{6} we need to define another category. \textnormal{(\textbf{Fin}},$\leq^{*}$) is the category where $\textnormal{Ob}((\textnormal{\textbf{Fin}},\leq^{*}))$ are finite linear orders and for all $L,K\in \textnormal{Ob}((\textnormal{\textbf{Fin}},\leq^{*}))$, $$\textnormal{Hom}_{(\textnormal{\textbf{Fin}},\leq^{*})}(L,K)=\{f\in K^L\colon \ f\text{ is a rigid surjection}\}.$$	Where
	a \textbf{rigid surjection} is a map $s\colon L\to K$ between linear orders that is a surjection and images of initial segments of $L$ are initial segments of $K$. This definition allows us to state the main result in \cite{6}.
	\begin{corollary}[Solecki,\cite{6}]
		Let $G\colon (\textnormal{\textbf{Fin}},\leq^*)^{\textnormal{op}}\to \textnormal{\textbf{Fin}}^{\textnormal{op}}$ be defined by $G(K,\leq)=K$ and $G(f)=f$. For any $\textnormal{\textbf{Fin}}^{\textnormal{op}}$-language $\mathcal{L}$ the category $(\mathcal{L},(\textnormal{\textbf{Fin}},\leq^*)^{\textnormal{op}})$ has the Ramsey property.
	\end{corollary}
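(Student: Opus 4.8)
The plan is to run the proof of Corollary 6 verbatim, replacing every category by its dual. Thus I take $\mathcal{C}=\textbf{Fin}^{\textnormal{op}}$, $\mathcal{D}=(\textbf{Fin},\leq^{*})^{\textnormal{op}}$, and $G$ the forgetful functor of the statement. Given $\mathsf{K},\mathsf{M}\in\textnormal{Ob}((\mathcal{L},(\textbf{Fin},\leq^{*})^{\textnormal{op}}))$ and $r>0$, I view them as the $\mathcal{D}$-blocks $(\mathsf{K},\textnormal{Id}_{G(K)},K)$ and $(\mathsf{M},\textnormal{Id}_{G(M)},M)$, both monic since identities split, and feed them into the Partite Construction (Corollary 5). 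The only nontrivial work is then (a) checking hypotheses (1)--(4) for this $\mathcal{C},\mathcal{D},G$, and (b) forgetting the block data at the end.

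For the hypotheses: (1) is immediate, since both $\textnormal{Hom}_{\textbf{Fin}^{\textnormal{op}}}(X,Y)=\textnormal{Hom}_{\textbf{Fin}}(Y,X)$ and the set of rigid surjections between two finite linear orders are finite. Hypothesis (2) is the exact dual of the corresponding point in Corollary 6: a morphism of $\mathcal{D}$ is a rigid surjection $s$ in $\textbf{Fin}$, and $G(s)$ has a left inverse in $\textbf{Fin}^{\textnormal{op}}$ precisely when $s$ has a section in $\textbf{Fin}$, which it does because every surjection of finite sets splits. Hypothesis (3) is the part that, as in Corollary 6, deserves a word: a colimit in $\textbf{Fin}^{\textnormal{op}}$ over a finite diagram is a limit in $\textbf{Fin}$ over the opposite diagram, which exists and is again finite, being the subset
\[
Z=\Big\{(x_S)_{S\in\textnormal{Ob}(J)}\in\prod_{S\in\textnormal{Ob}(J)}F(S)\ :\ F(f)(x_T)=x_S\ \textnormal{for every}\ f\colon S\to T\ \textnormal{in}\ J\Big\}
\]
of a finite product, where $F(f)$ is read as the underlying map $F(T)\to F(S)$ in $\textbf{Fin}$; this is the dual of the quotient formula used for $\textbf{Fin}$ in Corollary 6. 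Hypothesis (4) is where the genuine Ramsey content enters: $C\to(B)^{A}_{r}$ in $(\textbf{Fin},\leq^{*})^{\textnormal{op}}$ unwinds to the statement that for every $r$-coloring of the rigid surjections from $C$ onto $A$ there is a rigid surjection $t\colon C\to B$ such that all composites $s\circ t$ with $s\colon B\to A$ a rigid surjection get one color. This is exactly the dual Ramsey theorem in its rigid-surjection formulation (the reformulation of the Graham--Rothschild theorem), which I invoke as a known classical result rather than reprove.

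With (1)--(4) verified, the Partite Construction yields a monic $\mathcal{D}$-block $\mathtt{Z}=(\mathsf{Z},\pi,L)$ with $\mathtt{Z}\to(\mathsf{M})^{\mathsf{K}}_{r}$ in $Bl^{m}_{(\textbf{Fin},\leq^{*})^{\textnormal{op}}}$. It remains to transport this into $(\mathcal{L},(\textbf{Fin},\leq^{*})^{\textnormal{op}})$. Here $\pi$ is underlain by a function $\bar\pi\colon L\to Z$ in $\textbf{Fin}$, and a block-monomorphism $(\mathsf{K},\textnormal{Id},K)\to\mathtt{Z}$ is underlain by a surjective $\mathcal{L}$-homomorphism $\bar f\colon Z\to K$ with $\bar f\circ\bar\pi$ a rigid surjection $L\to K$, whereas a morphism $\mathsf{K}\to\mathsf{Z}$ in $(\mathcal{L},(\textbf{Fin},\leq^{*})^{\textnormal{op}})$ is a \emph{rigid} surjective $\mathcal{L}$-homomorphism $Z\to K$. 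Dually to the ``order $Z$ so that $\pi$ is weakly increasing'' step of Corollary 6, the plan is to endow $Z$ with a linear order $L'$ --- making $\mathsf{Z}$ an object $G(L')$ of the target category --- engineered from the specific $\bar\pi$ so that the forgetful functor carries the block-monomorphisms onto the rigid-surjection homomorphisms, whence monochromaticity transfers and $\mathsf{Z}\to(\mathsf{M})^{\mathsf{K}}_{r}$ follows.

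I expect this final paragraph to hold the only real difficulty. In the direct case the required order is mild: a block-injection is increasing exactly when its composite with the weakly increasing $\pi$ is. In the dual case the naive parallel breaks, because demanding that $\bar f$ with $\bar f\circ\bar\pi$ a rigid surjection be \emph{itself} rigid (sending initial segments of $Z$ to initial segments of $K$) is not forced by the order on the codomain alone. Producing a single linear order on $Z$ that enforces rigidity simultaneously for all the relevant $\bar f$ is the delicate point, and it is precisely here that one must use the particular $\pi$ delivered by the Partite Construction rather than an arbitrary projection. Everything else is a mechanical dualization of Corollary 6, with the identification of hypothesis (4) as the dual Ramsey theorem being the sole other place where Ramsey-theoretic input is needed.
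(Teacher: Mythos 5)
Your reduction is the same as the paper's: the same choice of $\mathcal{C}=\textbf{Fin}^{\textnormal{op}}$, $\mathcal{D}=(\textbf{Fin},\leq^{*})^{\textnormal{op}}$ and $G$, the same identity-projection blocks, and the same verification of hypotheses (1)--(4) of the Partite Construction (finite colimits in $\textbf{Fin}^{\textnormal{op}}$ as the compatible-tuples subset of a finite product, and the Graham--Rothschild theorem for (4)). But the last step is a genuine gap, and you flag it yourself: you never produce the linear order on $Z$. The statement you need --- that a single order on $Z$ makes every homomorphism $\bar{f}\colon Z\to K$ with $\bar{f}\circ\bar{\rho}$ a rigid surjection itself a rigid surjection, where $\bar{\rho}\colon L\to Z$ is the function underlying the projection of the block $\mathtt{Z}=(\mathsf{Z},\rho,L)$ (your $\bar{\pi}$) --- is the only content of this corollary beyond the Partite Construction and the dual Ramsey theorem, so deferring it as ``the delicate point'' leaves the proof unfinished exactly where it stops being mechanical dualization.

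The paper closes this gap with a short explicit construction: linearly order the image $\rho(L)$ by $a\leq b$ if and only if $\min(\rho^{-1}(a))\leq\min(\rho^{-1}(b))$, and place $Z\setminus\rho(L)$ above $\rho(L)$, so that $\rho(L)$ is an initial segment of $Z$. This suffices. If $I$ is an initial segment of $Z$ with $I\subseteq\rho(L)$, set $t=\max_{a\in I}\min(\rho^{-1}(a))$; then $I=\bar{\rho}(\{0,\dots,t\})$, since for $j\leq t$ we have $\min(\rho^{-1}(\bar{\rho}(j)))\leq j\leq t$ and hence $\bar{\rho}(j)\in I$, while conversely each $a\in I$ equals $\bar{\rho}(\min(\rho^{-1}(a)))$ with $\min(\rho^{-1}(a))\leq t$. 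Therefore $\bar{f}(I)=(\bar{f}\circ\bar{\rho})(\{0,\dots,t\})$ is an initial segment of $K$ by rigidity of $\bar{f}\circ\bar{\rho}$. If instead $I\supseteq\rho(L)$, then $\bar{f}(I)\supseteq(\bar{f}\circ\bar{\rho})(L)=K$, and surjectivity of $\bar{f}$ follows the same way. Note that this also corrects your diagnosis: no special property of the $\rho$ ``delivered by the Partite Construction'' is used --- the min-of-preimage order makes the implication ($\bar{f}\circ\bar{\rho}$ rigid $\Rightarrow$ $\bar{f}$ rigid) true for an arbitrary function $\bar{\rho}\colon L\to Z$. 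What was missing was not extra information about $\rho$ but the definition of the order itself.
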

	\begin{proof}
		Let $\mathsf{K},\mathsf{M}\in \textnormal{Ob}(\mathcal{L},(\textbf{Fin},\leq^*)^{\text{op}})$ and $r>0$.  We view $\mathsf{K}$ and $\mathsf{M}$ as the $(\textbf{Fin},\leq^*)^{\text{op}}$-blocks $\mathsf{K}=(\mathsf{K},\text{Id}_{G(K)},K)$ and $\mathsf{M}=(\mathsf{M},\text{Id}_M,M)$ respectively.  We claim that $\textbf{Fin}^{\text{op}}$, $(\textbf{Fin},\leq^*)^{\text{op}}$, and $G$ satisfy conditions (1)-(4) for the Partite Construction.   First note that $\textbf{Fin}^{\text{op}}$ has colimts when the category $J$ has a finite set of objects. The limit over diagram $F$ is $$Z=\{(s_S)_{s\in J}\in \prod_{S\in \text{Ob}(J)}F(S)\colon \text{ for all } S,T\in \text{Ob}(J), f\in \text{Hom}_J(X,Y), F(f)(s_S)=(s_T)\}$$
		and $\phi_S\colon Z\to F(S)$ is given by the projection maps $\pi_S$.  Also the category $(\textbf{Fin},\leq^*)^\text{op}$ has the Ramsey by the Graham--Rothschild Theorem in \cite{8}. The remaining conditions are trivial to prove. By the Partite Construction there is a $\mathtt{Z}=(\mathsf{Z},\rho,L)$ so that $\mathtt{Z}\to (\mathsf{M})^{\mathsf{K}}_r$ in
		$Bl_{(\textbf{Fin},\leq^*)^{\text{op}}}$. We linearly order $\rho(L)$ by $a\leq b$ if and only if $\text{min}(\rho^{-1}(a))\leq \text{min}(\rho^{-1}(b))$ and order the rest of $Z$ so that $\rho(L)$ is an initial segment. Then $\mathsf{Z}\to (\mathsf{M})^{\mathsf{K}}_r$ in \\
		$(\mathcal{L},(\textbf{Fin},\leq^*)^{\text{op}})$.
	\end{proof}
\section{Declarations}
\subsection{Conflicts of Interest}
Not applicable.
\subsection{Availability of Data and Materials}
Not applicable.
\subsection{Funding}
The author was partially supported by NSF grant DMS-1954069.
	\bibliographystyle{plain}
	\bibliography{reference}
\end{document}